\renewcommand{\arraystretch}{1.1}
\theoremstyle{plain}
\newtheorem{theorem}{Theorem}
\newtheorem{proposition}[theorem]{Proposition}
\newtheorem{lemma}[theorem]{Lemma}
\newtheorem{corollary}[theorem]{Corollary}
\theoremstyle{definition}
\newtheorem{remark}[theorem]{Remark}
\newtheorem{question}[theorem]{Question}
\begin{document}

\title[Triple-crossing projections, moves and minimal diagrams]{Triple-crossing projections, moves on knots\\ and links, and their minimal diagrams}

\author[M. Jab{\l}onowski]{Micha{\l} Jab{\l}onowski}

\address{\parbox{\linewidth}{Micha{\l} Jab{\l}onowski, Institute of Mathematics, Faculty of Mathematics, Physics\\ and Informatics, University of Gda\'nsk, 80-308 Gda\'nsk, Poland\\\ }}

\email[Corresponding author]{michal.jablonowski@gmail.com}
\urladdr{\url{https://mat.ug.edu.pl/~mjablon}}

\author[{\L}. Trojanowski]{{\L}ukasz Trojanowski}

\address{{\L}ukasz Trojanowski}

\email{hehosek@gmail.com}

\keywords{knot projections, triple-crossing diagram, minimal triple-crossing number, moves on diagrams of knots and links}

\subjclass[2010]{57M25, 68R10, 05C10}

\date{\today}

\begin{abstract}
In this paper we present a systematic method to generate prime knot and prime link minimal triple-point projections, and then classify all classical prime knots and prime links with triple-crossing number at most four. We also extend the table of known knots and links with triple-crossing number equal to five. By introducing a new type of diagrammatic move, we reduce the number of generating moves on triple-crossing diagrams, and derive a minimal generating set of moves connecting triple-crossing diagrams of the same knot.
\end{abstract}

\maketitle

\section{Introduction}

It is known that any knot or link has a diagram where, at each its multiple point in the plane, only three strands are allowed to cross (pairwise transversely), see \cite{Ada13}. Such triple-point diagrams has been studied in several recent papers, such as \cite{Ada13, AHP19, ACFIPVWZ14, Nis19}.
\par
A diagrammatic \emph{move} is a transformation which can be used to alter a diagram while not changing the underlying knot or link type. In \cite{AHP19} it is shown that a set of five types of moves on triple-point diagrams ($1$-moves, $2$-moves, basepoint moves, band moves, and trivial pass moves) are sufficient to pass between any two diagrams of the same link up to certain equivalence. In this paper, we reduce the number of moves (e.g. the basepoint move follows from the other moves). By introducing $J$-type moves (that are similar to band moves), we prove that the set of all five mentioned moves is generated by two types of moves (the $J$-move and the trivial pass move). 
\par
Later in this paper, we systematically generate prime knot and prime link minimal projections, and then classify all prime knots and prime links with triple-crossing number at most four. We also extend the table of known knots and links with triple-crossing number equal to five.
\par
The paper is organized as follows. In Section\;\ref{s2}, we review the triple-crossing projections and triple-crossing diagrams. In Section\;\ref{s6}, we introduce a type $J$-move and used it to reduce the number of generating moves on triple-crossing diagrams and derive a minimal generating set of moves connecting triple-crossing diagrams of the same knot. In Section\;\ref{s3}, we present a method to generate reduced prime triple-crossing knot and link projections, together with their enumeration and a graphical presentation. We also relate them to spherical quadrangulations and hexagonations. In Section\;\ref{s4}, we identify, by using classical invariants, all prime knots and links on these projections, and generate their minimal diagrams. Our algorithms were implemented in \texttt{Wolfram Language 11}. We use the knot and link names used in the \cite{katlas} database, and for $12$-crossing links we use the Hoste-Thistlethwaite database.

\section{Definitions}\label{s2}

The \emph{projection} of a knot or link $KL\subset \mathbb{R}^3$ is its image under the standard orthogonal projection $\pi:\mathbb{R}^3\to\mathbb{R}^2$ (or into a $2$-sphere) such that it has only a finite number of self-intersections, called \emph{multiple points}, and in each multiple point each pair of its strands are transverse. If each multiple point of a projection has multiplicity three then we call this projection a \emph{triple-crossing projection}.
\par
The \emph{triple-crossing} is a three-strand crossing with the strand labeled $T, M, B$, for top, middle and bottom. The \emph{triple-crossing diagram} is a triple-crossing projection such that each of its triple point is a triple-crossing, such that $\pi^{-1}$ of the strand labeled $T$ (in the neighborhood of that triple point) is on the top of the strand corresponding to the strand labeled $M$, and the latter strand is on the top of the strand corresponding to the strand labeled $B$. (See Figure\;\ref{r01}.)

\begin{figure}[h!t]
    \centering
    \def\svgwidth{0.5\columnwidth}
\begingroup%
  \makeatletter%
  \providecommand\color[2][]{%
    \errmessage{(Inkscape) Color is used for the text in Inkscape, but the package 'color.sty' is not loaded}%
    \renewcommand\color[2][]{}%
  }%
  \providecommand\transparent[1]{%
    \errmessage{(Inkscape) Transparency is used (non-zero) for the text in Inkscape, but the package 'transparent.sty' is not loaded}%
    \renewcommand\transparent[1]{}%
  }%
  \providecommand\rotatebox[2]{#2}%
  \newcommand*\fsize{\dimexpr\f@size pt\relax}%
  \newcommand*\lineheight[1]{\fontsize{\fsize}{#1\fsize}\selectfont}%
  \ifx\svgwidth\undefined%
    \setlength{\unitlength}{323.24305733bp}%
    \ifx\svgscale\undefined%
      \relax%
    \else%
      \setlength{\unitlength}{\unitlength * \real{\svgscale}}%
    \fi%
  \else%
    \setlength{\unitlength}{\svgwidth}%
  \fi%
  \global\let\svgwidth\undefined%
  \global\let\svgscale\undefined%
  \makeatother%
  \begin{picture}(1,0.30507789)%
    \lineheight{1}%
    \setlength\tabcolsep{0pt}%
    \put(0,0){\includegraphics[width=\unitlength,page=1]{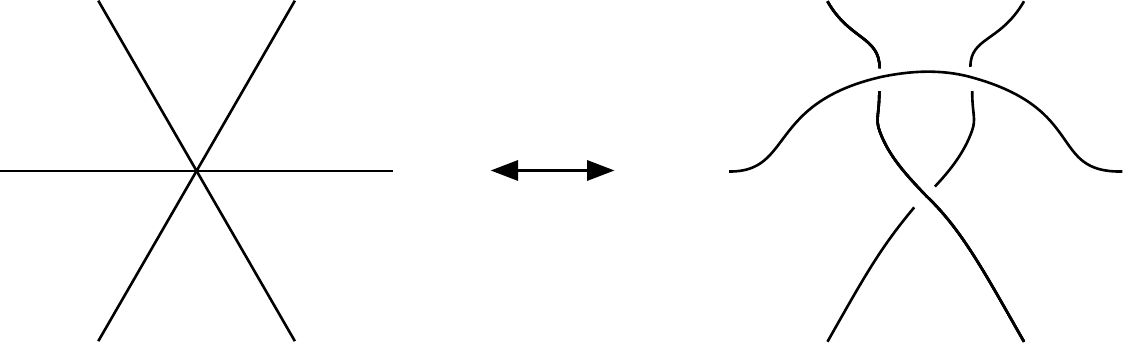}}%
    \put(0.24441612,0.10703206){\color[rgb]{0,0,0}\makebox(0,0)[lt]{\lineheight{1.25}\smash{\begin{tabular}[t]{l}$\scriptstyle T$\end{tabular}}}}%
    \put(0.17527425,0.02968691){\color[rgb]{0,0,0}\makebox(0,0)[lt]{\lineheight{1.25}\smash{\begin{tabular}[t]{l}$\scriptstyle M$\end{tabular}}}}%
    \put(0.08152262,0.0648438){\color[rgb]{0,0,0}\makebox(0,0)[lt]{\lineheight{1.25}\smash{\begin{tabular}[t]{l}$\scriptstyle B$\end{tabular}}}}%
  \end{picture}%
\endgroup%

    \caption{A deconstruction/construction of a triple-crossing.}
    \label{r01}
\end{figure}

The \emph{triple-crossing number} of a knot or link $KL$, denoted $c_3(KL)$, is the least number of triple-crossings for any triple-crossing diagram of $KL$. The classical double-crossing number invariant we will denote by $c_2$. The \emph{minimal triple-crossing diagram} of a knot or link $KL$ is a triple-crossing diagrams of $KL$ that has exactly $c_3(KL)$ triple-crossings. 

\section{Moves on triple-crossing diagrams}\label{s6}

There are the following type of moves: ($T_1$) $1$-moves, ($T_2$) $2$-moves, ($T_3$) basepoint moves, ($T_4$) band moves, depicted in Figure\;\ref{r27}. The trivial pass move ($TP$) is presented in Figure\;\ref{r24} right. For the precise definitions of the moves we send the reader to \cite{AHP19}.

\begin{figure}[h!t]
    \centering
    \def\svgwidth{0.9\columnwidth}
    \import{./figs/}{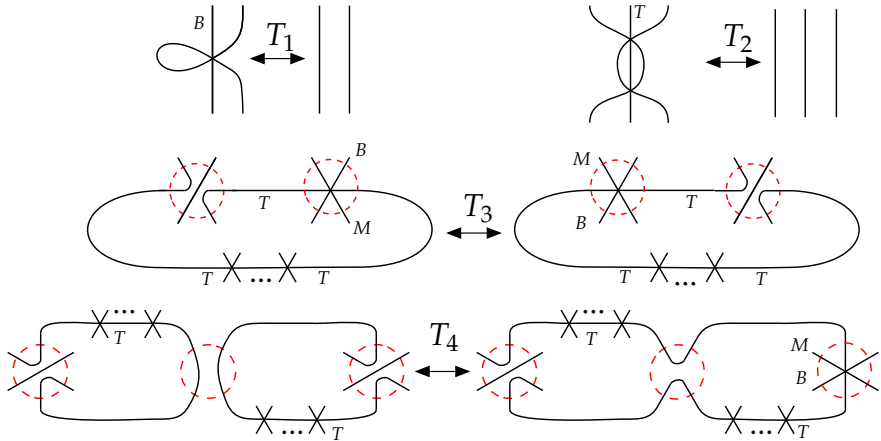}
    \caption{Moves on triple-crossing diagrams}
    \label{r27}
\end{figure}

Denote a $J$-type move by one of the moves $J_R$ or its mirror $J'_R$ presented in Figure\;\ref{r24}, where $D_1$ and $D_2$ are arbitrary disjoint sub-diagrams of a diagram $D=D_1\cup D_2$. We can reduce the number of the four mentioned types of moves as follows.

\begin{figure}[h!t]
    \centering
    \def\svgwidth{0.9\columnwidth}
    \import{./figs/}{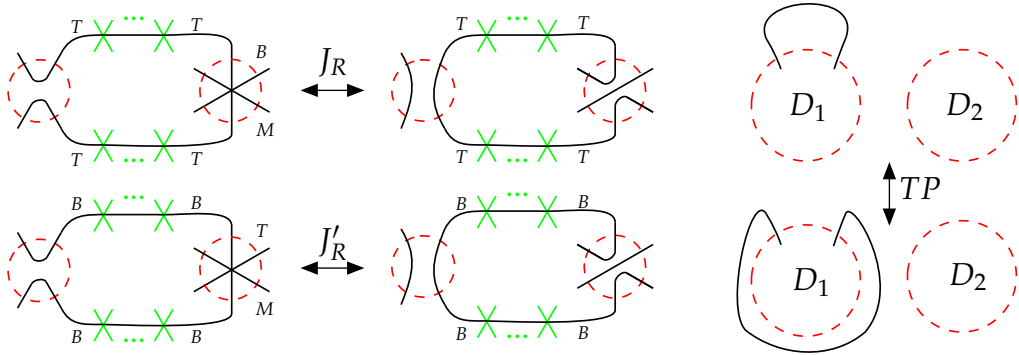}
    \caption{$J$-type moves and a $TP$ move on triple-crossing diagrams.}
    \label{r24}
\end{figure}

\begin{theorem}\label{tw1}
Any sequence of moves $T_1,T_2,T_3,T_4$ can be realized by the $J$-type moves and a planar isotopy.
\end{theorem}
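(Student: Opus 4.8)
The plan is to prove the theorem one generator at a time: it suffices to show that each of the moves $T_1$, $T_2$, $T_3$, $T_4$, performed in either direction, can be rewritten as a finite sequence of $J$-type moves (that is, $J_R$ and $J'_R$ moves) together with a planar isotopy supported in a disk. Once this is done, an arbitrary sequence of $T_1,\dots,T_4$ moves is realized simply by concatenating the corresponding local replacements and composing the ambient isotopies, which gives the statement.

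First I would treat the band move $T_4$, which is the move most visibly of $J$-type. I claim that, after a planar isotopy bringing the band into a standard position, a single band move is realized by one $J_R$ (or $J'_R$) move --- or, at worst, by a $J_R$ move followed by a $J'_R$ move in an adjacent region --- with the sub-diagrams $D_1$ and $D_2$ taken to be the two strands that the band runs between; this is read off by directly comparing the picture of $T_4$ in Figure \ref{r27} with the picture of the $J$-move in Figure \ref{r24}. Next comes the $2$-move $T_2$: the local model of a $2$-move (a cancelling pair of triple-crossings between two nearby parallel strands) is produced by first applying a $J$-move that introduces a band carrying two triple-crossings and then absorbing the resulting bigon by a planar isotopy; equivalently, one pushes a finger of one strand across the other by a band move and slides it back, so that $T_2$ follows from $T_4$ and hence from the $J$-moves. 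The $1$-move $T_1$ is the degenerate case of $T_2$ in which the two strands involved belong to the same arc of the diagram, and the same band-plus-isotopy argument applies without change.

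It remains to handle the basepoint move $T_3$. Here I would use the fact, already announced in the introduction, that $T_3$ is a consequence of the other moves: transporting a basepoint past a triple-crossing is achieved by locally creating a small trivial band with a $2$-move, routing the basepoint along that band, and then deleting the band again, each step being an instance of $T_2$, $T_4$, or a planar isotopy --- all of which we have just reduced to the $J$-moves. Combining the four cases yields Theorem \ref{tw1}.

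The main obstacle I anticipate is the bookkeeping of the top/middle/bottom labels at the triple-crossings in these local models: one must verify that the height assignments produced by $J_R$ and $J'_R$ genuinely match those in the standard pictures of $T_2$ and $T_4$ and their mirrors, rather than merely some admissible labeling, and that the step ``absorb the bigon by a planar isotopy'' does not covertly need a trivial pass move. Arranging the basepoint move to come out of $T_2$ and $T_4$ alone --- without invoking $TP$ --- is the most delicate point, since the basepoint must be slid past all three strands at a triple-crossing, so the auxiliary band has to be chosen and removed entirely within a disk.
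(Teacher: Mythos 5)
Your overall strategy---reduce everything to band moves and then to $J$-moves---is not unreasonable (the paper itself remarks that $J$-type moves and $T_4$-type moves generate each other), but as written the argument has concrete gaps at the points where you lean on ``degeneration'' instead of an explicit move sequence. The most serious one is the claim that $T_1$ is the degenerate case of $T_2$ in which the two strands belong to the same arc: a $1$-move changes the number of triple-crossings by one while a $2$-move changes it by two, so no specialization of $T_2$ can ever produce a $T_1$. The paper treats $T_1$ by its own explicit $J$-move sequence (closing two strands to form a loop, Figure~\ref{r14}), and it does the same for $T_2$ directly (Figure~\ref{r18}) rather than routing it through $T_4$; your ``push a finger across by a band move and slide it back'' description, taken literally, returns you to the starting diagram rather than producing the cancelling pair, so it needs to be replaced by an actual sequence of pictures. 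Similarly, the basepoint move $T_3$ is only asserted to follow; in the paper it requires an explicit sequence using both $J$-moves and the already-established realization of $T_1$ (Figure~\ref{r19}), with the three cases of which strand ($T$, $M$ or $B$) the basepoint sits on handled separately---exactly the label bookkeeping you flag as an anticipated obstacle but do not resolve.

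There is also a structural step you are missing. The paper does not get by with $J_R$ and $J'_R$ alone in the local models: it introduces auxiliary left-handed moves $J_L$, $J'_L$ (Figure~\ref{r11}), realizes $T_1$--$T_4$ using all four, and only then shows separately that $J_L$ and $J'_L$ are themselves compositions of $J_R$ and $J'_R$ (Figure~\ref{r25}). Your claim that a band move is ``one $J_R$, or at worst a $J_R$ followed by a $J'_R$'' glosses over precisely the orientations/handedness for which this extra reduction is needed. So the proposal identifies the right target but does not yet constitute a proof: each of the four reductions has to be exhibited as an explicit sequence of $J$-moves with the $T,M,B$ labels tracked, and the left-handed $J$-moves have to be disposed of.
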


\begin{proof}
First, we define the $J_L$ move as shown in Figure\;\ref{r11} and denote $J_L'$ its mirror move. 
To prove the theorem, we will show that all (top, bottom, middle) interpretations of the type of moves $T_1$, $T_2$, $T_3$, $T_4$ are generated by moves $J_R$, $J_R'$, $J_L$, $J_L'$ (called in this Proof as $J$-type moves), and later that moves $J_L$, $J_L'$ are generated by $J_R$, $J_R'$.

\begin{figure}[h!t]
    \centering
    \def\svgwidth{.85\columnwidth}
    \import{./figs/}{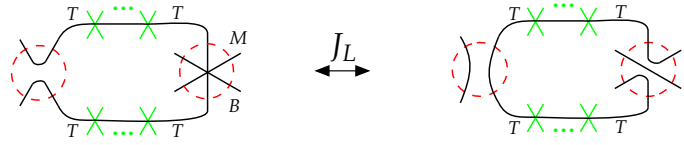}
    \caption{Move $J_L$.}
    \label{r11}
\end{figure}

The $T_1$-type moves are realized by $J$-type moves by closing two strands to form a loop, and a planar isotopy as in Figure\;\ref{r14}. 

\begin{figure}[h!t]
    \centering
    \def\svgwidth{0.9\columnwidth}
    \import{./figs/}{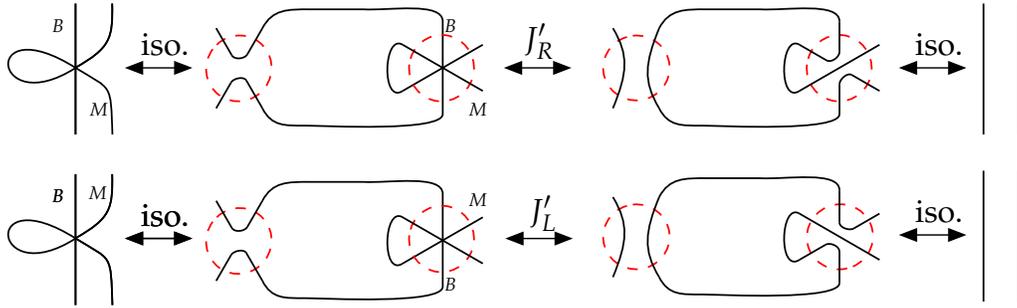}
    \caption{Realizing a $T_1$-type move.}
    \label{r14}
\end{figure}

The $T_2$-type move is realized by $J$-type moves, and a planar isotopy as in Figure\;\ref{r18}. The $T_3$-type moves are realized by $J$-type moves and the $T_1$-type moves, and a planar isotopy as in Figure\;\ref{r19}, where only one case is considered. The other interpretations of the $T_3$-type moves are derived by analogy, also by $J$-type moves.
\par
It is straightforward that $T_4$-type moves can be realized by $J$-type moves, by performing it on just a part of the pictures (where the move change the image) as in Figure\;\ref{r12}. Finally, moves $J_L$, $J_L'$ are generated by $J_R$, $J_R'$ as shown in Figure\;\ref{r25} (in the $J_L$ case, the other move are derived analogously by the composition of mirror moves).

\begin{figure}[h!t]
    \centering
    \def\svgwidth{1\columnwidth}
    \import{./figs/}{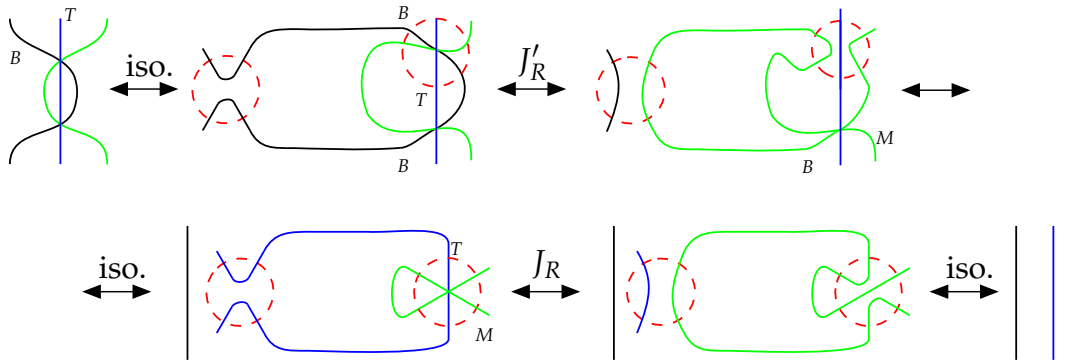}
    \caption{Realizing a $T_2$-type move.}
    \label{r18}
\end{figure}

\begin{figure}[h!t]
    \centering
    \def\svgwidth{1.15\columnwidth}
    \import{./figs/}{LM19.pdf_tex}
    \caption{Realizing a $T_3$-type move.}
    \label{r19}
\end{figure}

\begin{figure}[h!t]
    \centering
    \def\svgwidth{1\columnwidth}
    \import{./figs/}{LM12.pdf_tex}
    \caption{Realizing a $T_4$-type move.}
    \label{r12}
\end{figure}

\begin{figure}[h!t]
    \centering
    \def\svgwidth{1.1\columnwidth}
    \import{./figs/}{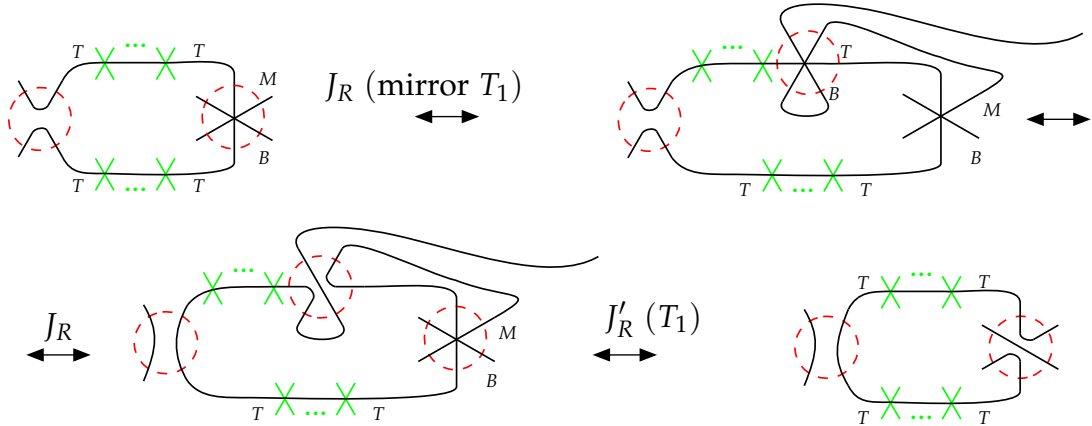}
    \caption{Realizing a $J_L$-type move.}
    \label{r25}
\end{figure}

\end{proof}

\begin{remark}
In fact the $J$-type moves can be realized by $T_4$-type moves (and its mirror moves) and a planar isotopy, as shown in Figure\;\ref{r23}.
\end{remark}

\begin{figure}[h!t]
    \centering
    \def\svgwidth{1\columnwidth}
    \import{./figs/}{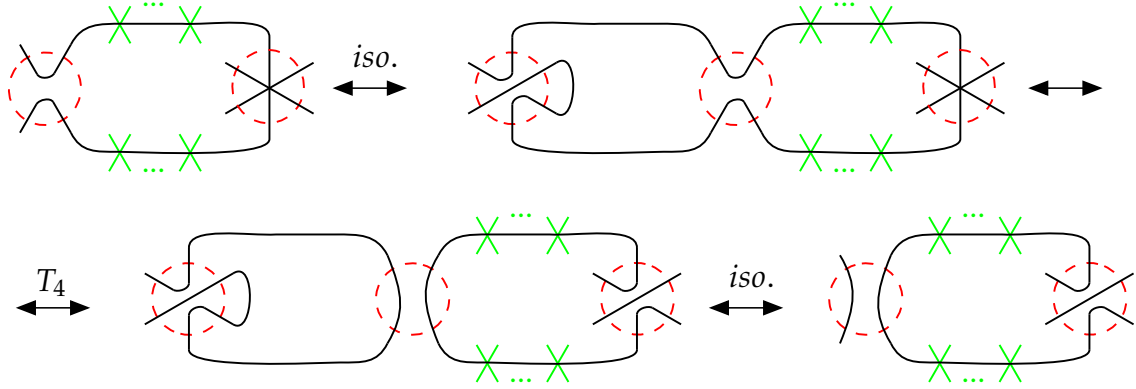}
    \caption{Realizing a $J$-type move.}
    \label{r23}
\end{figure}

A \emph{natural orientation} (see an equivalent definition in \cite{AHP19}) on a triple-crossing diagram is an orientation of each component of that link, such that in each crossing the strands are oriented in-out-in-out-in-out, as we encircle the crossing. Theorem\;\ref{tw1} together with main theorems from \cite{AHP19} give us the following corollaries. 

\begin{corollary}
Two triple-crossing diagrams of knots are related by a sequence of $J$-type moves, if and only if they define the same knot type.
\end{corollary}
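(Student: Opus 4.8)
The plan is to deduce this corollary by combining Theorem~\ref{tw1} with the classification of triple-crossing diagrams of a fixed link from \cite{AHP19}, specialized to knots, using the Remark above for the easy implication. Throughout, triple-crossing diagrams are considered up to planar isotopy (of the plane, equivalently of the $2$-sphere), so that ``related by a sequence of $J$-type moves'' tacitly permits planar isotopies between consecutive $J$-type moves; note that every diagram occurring in such a sequence is again a triple-crossing diagram, since the $J$-type moves and the moves $T_1,\dots,T_4$ are all defined as local modifications of triple-crossing diagrams.

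For the ``only if'' implication, I would show that a single $J$-type move does not alter the knot type; the statement for an arbitrary sequence then follows by induction on its length. This is precisely the content of the Remark: Figure~\ref{r23} realizes each of $J_R$ and $J_R'$ as a composition of $T_4$-type (band) moves, their mirror moves, and a planar isotopy. Since a band move is, by the very definition of a \emph{move} in Section~\ref{s6}, a transformation that does not change the underlying knot or link type, the same holds for $J_R$ and $J_R'$, and hence for any finite sequence of $J$-type moves.

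For the ``if'' implication, suppose $D$ and $D'$ are triple-crossing diagrams of one and the same knot. By the main theorem of \cite{AHP19} (in the knot case), $D$ and $D'$ can be joined by a finite sequence of moves of types $T_1$, $T_2$, $T_3$, $T_4$ together with planar isotopies. Applying Theorem~\ref{tw1} to each of these moves in turn, we may replace it by a finite sequence of $J$-type moves and planar isotopies; concatenating all of these replacements produces a sequence of $J$-type moves (interspersed with planar isotopies) transforming $D$ into $D'$, as required.

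The delicate point — and the main thing to get right — is matching the precise statement of \cite{AHP19} to what is invoked above. Their theorem is stated for links, uses the five move types $T_1,\dots,T_4$ together with the trivial pass move $TP$, and is phrased up to a choice of natural orientation. For the corollary as stated (about knots, with only $J$-type moves) one must therefore check that, in the one-component case, (i) the $TP$ move is not needed, i.e.\ any connecting sequence involving it can be rerouted through $T_1,\dots,T_4$ alone, and (ii) the natural-orientation bookkeeping is harmless, which is clear since a knot diagram carries an essentially unique natural orientation. Once these two points are in place, the two implications above complete the proof; the same argument with the full five-move set of \cite{AHP19} yields the corresponding statement for links, with sequences of $J$-type moves and $TP$ moves.
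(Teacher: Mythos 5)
Your proposal is correct and follows exactly the route the paper intends: the paper gives no written proof beyond the one-line remark that ``Theorem~\ref{tw1} together with main theorems from \cite{AHP19} give us the following corollaries,'' and your argument (the Remark via $T_4$-realizability for the ``only if'' direction, the main theorem of \cite{AHP19} plus Theorem~\ref{tw1} for the ``if'' direction) is the intended instantiation of that one-liner. Your flagged ``delicate point'' is also resolved the way you suspect: in \cite{AHP19} the trivial pass move is only needed to handle split components, so it drops out in the knot case, and the natural orientation on a knot diagram is unique up to reversal, which is exactly the equivalence built into the statement.
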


\begin{corollary}
Two unoriented triple-crossing diagrams $D_1$ and $D_2$ of nonsplit links are related by a sequence of $J$-type moves if and only if natural orientations on $D_1$ and $D_2$ define the same oriented link, up to complete reversal.
\end{corollary}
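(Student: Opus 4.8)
The plan is to derive this from Theorem~\ref{tw1}, the Remark that follows it, and the nonsplit-link classification theorem of \cite{AHP19}. The key observation is that, taken together, Theorem~\ref{tw1} and the Remark show that the equivalence relation on triple-crossing diagrams generated by the $J$-type moves (together with planar isotopy) is exactly the one generated by the moves $T_1,T_2,T_3,T_4$ (together with planar isotopy); once this is established, the corollary becomes a verbatim translation of the \cite{AHP19} statement.

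The first step is to identify the two move systems. Theorem~\ref{tw1} gives one inclusion: any sequence of $T_1,T_2,T_3,T_4$ moves is realized by $J$-type moves and a planar isotopy, so $\{T_1,T_2,T_3,T_4\}$-equivalence implies $J$-equivalence. The Remark gives the converse: each $J$-type move is realized by $T_4$-type moves (and their mirrors) and a planar isotopy, as in Figure~\ref{r23}, and $T_4$ is among the four moves, so $J$-equivalence implies $\{T_1,T_2,T_3,T_4\}$-equivalence. Hence the two relations coincide, and one may freely interchange ``$D_1$ and $D_2$ are related by a sequence of $J$-type moves'' with ``$D_1$ and $D_2$ are related by a sequence of $T_1,T_2,T_3,T_4$ moves and a planar isotopy''.

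The second step is to feed this into \cite{AHP19}: for nonsplit links, two unoriented triple-crossing diagrams are related by a sequence of $T_1,T_2,T_3,T_4$ moves and a planar isotopy if and only if, equipped with natural orientations, they represent the same oriented link up to complete reversal. I would also recall from \cite{AHP19} that a triple-crossing diagram of a nonsplit link admits exactly two natural orientations, which are complete reversals of one another; this is what makes the right-hand side independent of the particular natural orientations chosen on $D_1$ and $D_2$. Combining the two steps yields both implications of the corollary simultaneously, and specializing to a single component recovers the preceding corollary for knots.

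The one point that is not purely formal is the status of the trivial pass move $TP$ of Figure~\ref{r24}: the complete five-move generating set of \cite{AHP19} also contains $TP$, whereas the statement permits only $J$-type moves. I expect this to be the main thing to get right, and would handle it by invoking that in the nonsplit case the trivial pass move is redundant in the \cite{AHP19} classification (it is needed only to realize equivalences involving split links), so that $T_1,T_2,T_3,T_4$ already suffice there; after that, Step~1 shows no moves beyond the $J$-type moves are required.
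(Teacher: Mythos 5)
Your proposal is correct and follows essentially the route the paper intends: the paper offers no explicit proof, stating only that Theorem~\ref{tw1} together with the main theorems of \cite{AHP19} yield the corollaries, and your argument is precisely that derivation spelled out --- Theorem~\ref{tw1} for one inclusion of move-equivalences, the Remark (Figure~\ref{r23}) for the converse, and the nonsplit-link classification of \cite{AHP19} (which indeed does not require the $TP$ move in the nonsplit case, as the third corollary's separate treatment confirms). Your explicit handling of the $TP$ issue and of the well-definedness of natural orientations is a welcome elaboration of details the paper leaves implicit.
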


\begin{corollary}
Two unoriented triple-crossing diagrams $D_1$ and $D_2$ are related by a sequence of $\{J,TP\}$-type moves if and only if natural orientations on $D_1$ and $D_2$ define the same oriented link, up to reversal of maximal nonsplit sublinks.
\end{corollary}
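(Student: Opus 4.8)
The plan is to obtain this corollary in exactly the same way as the two preceding ones, namely by combining Theorem~\ref{tw1} with the corresponding classification theorem of Adams--Hildebrand--Perry. First I would recall the relevant statement from \cite{AHP19}: two unoriented triple-crossing diagrams $D_1$ and $D_2$ are connected by a finite sequence of moves of types $T_1,T_2,T_3,T_4,TP$ (together with planar isotopy) if and only if the natural orientations on $D_1$ and $D_2$ present the same oriented link up to reversal of maximal nonsplit sublinks. Granting this, it suffices to prove that the move set $\{J,TP\}$ and the move set $\{T_1,T_2,T_3,T_4,TP\}$ generate the \emph{same} equivalence relation on the set of triple-crossing diagrams.

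One inclusion is precisely Theorem~\ref{tw1}: any sequence of $T_1,T_2,T_3,T_4$ moves is realized by $J$-type moves and a planar isotopy, so any $\{T_1,T_2,T_3,T_4,TP\}$-sequence can be rewritten as a $\{J,TP\}$-sequence (the $TP$ moves being left untouched). For the reverse inclusion I would invoke the Remark following Theorem~\ref{tw1}: each $J$-type move is realized by $T_4$-type moves (and their mirrors) and a planar isotopy, so every $\{J,TP\}$-sequence can be rewritten as a $\{T_1,\dots,T_4,TP\}$-sequence (indeed only $T_4$ and $TP$ moves are needed). Hence the two equivalence relations coincide, and the corollary is immediate from the cited theorem.

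In other words, the argument is a formal ``sandwiching'' of the relation generated by $\{J,TP\}$ between two copies of the relation generated by $\{T_1,\dots,T_4,TP\}$, using Theorem~\ref{tw1} in one direction and the Remark in the other. The only points requiring care are bookkeeping ones: checking that the planar isotopies inserted by Theorem~\ref{tw1} and by the Remark change neither the underlying link nor its natural orientation (immediate, since the natural orientation is determined by the unoriented diagram up to the reversals already allowed in the statement), and verifying that the $TP$ move is transported unchanged through both translations, so that it is exactly the presence of $TP$ that upgrades ``complete reversal'' in the previous corollary to ``reversal of maximal nonsplit sublinks'' here. I do not anticipate any genuine obstacle: all the substantive content sits in Theorem~\ref{tw1} and in \cite{AHP19}, and this corollary is their direct combination.
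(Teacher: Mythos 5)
Your proposal is correct and matches the paper's (implicit) argument: the paper derives this corollary directly from Theorem~\ref{tw1} together with the main theorems of \cite{AHP19}, with the Remark that $J$-type moves are realized by $T_4$-type moves supplying exactly the converse translation you describe. The only thing you make more explicit than the paper is the two-sided ``sandwiching'' of the equivalence relations, which is the intended reading.
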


The moves $J_R$ and $J_R'$ are independent of each another, because $J_R$ (as oppose to the $J_R'$) move changes the number of triple-crossings of name $eY$ in an $sPD$ code of the diagram (see Section\;\ref{s3}) shown in Figure\;\ref{r20}, therefore we have the following.

\begin{corollary}
The set $\{J_R, J_R'\}$ is a minimal generating set of moves connecting triple-crossing diagrams of the same knot.
\end{corollary}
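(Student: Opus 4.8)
The plan is to combine what is already in hand with a short invariance argument. First I would observe that the statement has two halves: $\{J_R,J_R'\}$ is a generating set, and no proper subset of it is. The first half is exactly the corollary asserting that two triple-crossing diagrams of knots are related by a sequence of $J$-type moves if and only if they define the same knot (recall a $J$-type move is $J_R$ or its mirror $J_R'$), which in turn follows from Theorem~\ref{tw1} and the main results of \cite{AHP19}. So it remains to prove minimality. Since $\{J_R,J_R'\}$ has only two elements, its proper subsets are $\varnothing$, $\{J_R\}$ and $\{J_R'\}$, and it suffices to show that neither singleton connects all triple-crossing diagrams of a fixed knot; the empty case then follows formally, since if planar isotopy alone joined all diagrams of a knot then so would $\{J_R\}$ — and concretely, the unknot already has triple-crossing diagrams with differing numbers of crossings.

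The key step is to produce a quantity preserved by one of the two moves but not by the other. For a triple-crossing diagram $D$ write $n_{eY}(D)$ for the number of triple-crossings of type $eY$ occurring in an $sPD$ code of $D$ (in the notation of Section~\ref{s3}). I would first check that $n_{eY}$ is well defined on planar-isotopy classes of diagrams: it counts local configurations of a triple-crossing together with the cyclic arrangement of the arcs leaving it, so it is unaffected by isotopy of the plane (or sphere) and by the choice of base points or reading direction used to extract the $sPD$ code. Next I would inspect the two local pictures of the $J_R'$ move in Figure~\ref{r24} and verify that performing $J_R'$ in either direction neither creates nor destroys a crossing of type $eY$ (in particular at the crossings on the boundary of the region where the move acts), so that $n_{eY}$ is invariant under $J_R'$ and planar isotopy; by contrast a single $J_R$ move changes $n_{eY}$, as exhibited in Figure~\ref{r20}. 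Hence the two diagrams related by that $J_R$ move represent the same knot yet cannot be joined by any sequence of $J_R'$ moves and planar isotopies, so $\{J_R'\}$ is not a generating set.

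To rule out $\{J_R\}$ I would apply the mirror-image (orientation-reversing) reflection of the plane to the whole discussion: reflecting a diagram interchanges $J_R$ and $J_R'$ and carries the configuration $eY$ to a mirror configuration, say $\overline{eY}$. Then $n_{\overline{eY}}(D):=n_{eY}(\overline{D})$ is invariant under $J_R$ and planar isotopy but is changed by a $J_R'$ move, with the reflection of the pair in Figure~\ref{r20} serving as a witness; therefore $\{J_R\}$ does not generate either. Combining the two cases, no proper subset of $\{J_R,J_R'\}$ connects all triple-crossing diagrams of a given knot, while the full set does, so $\{J_R,J_R'\}$ is a minimal generating set.

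The part I expect to require the most care is the invariance claim for $n_{eY}$: one must confirm that a $J_R'$ move, carried out on the indicated region while the arbitrary disjoint sub-diagrams $D_1,D_2$ fill out the rest of the picture, genuinely leaves the type of every triple-crossing unchanged, and that the label $eY$ is assigned unambiguously no matter how the $sPD$ code is read off. Once the $eY$-count is established as a bona fide invariant under planar isotopy and $J_R'$ moves, and the change under $J_R$ in Figure~\ref{r20} is checked, the rest of the argument is immediate.
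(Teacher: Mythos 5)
Your proposal is correct and takes essentially the same approach as the paper: generation is obtained from Theorem~\ref{tw1} together with the results of \cite{AHP19}, and minimality from the observation that a $J_R$ move changes the number of triple-crossings named $eY$ in the $sPD$ code while $J_R'$ does not (Figure~\ref{r20}). You additionally make explicit the mirror-symmetry step needed to rule out the singleton $\{J_R\}$ (not just $\{J_R'\}$), which the paper leaves implicit in its one-line claim that the two moves are ``independent''.
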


\section{Generating triple-point projections}\label{s3}

\subsection{From classical projections to triple-point projections}

From the set of all pairwise non-isotopic (on a sphere) classical-crossing projections having $n$ crossings, we choose a set of projections (shadows) of connected, prime (i.e. it is not a connected sum of two diagrams) diagrams of classical links, without mirror images, without loops.
\par
We denote this set as $Sh_{n}$, and generate every element of it just as in one of the authors earlier papers (in \cite[Section\; 3.]{Jab19}). The enumeration of elements of this set (for an even $n$) is presented in Table\;\ref{table1} (second column), we associate each such projection with its $PD$ code (described in that paper and in \cite{katlas}).

\begin{figure}[h!t]
    \centering
    \def\svgwidth{0.79\columnwidth}
    \import{./figs/}{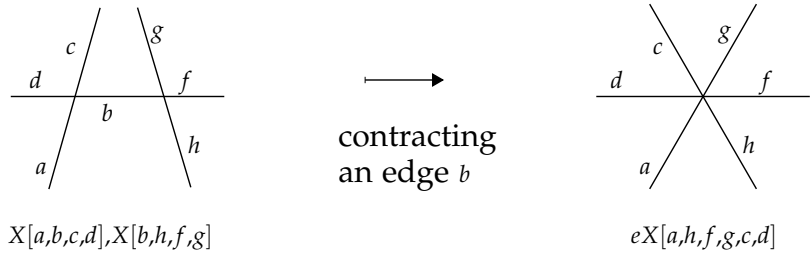}
    \caption{The contraction operation on an edge}
    \label{r04}
\end{figure}

In the following theorem we are using the word "prime" in two different ways.

\begin{theorem}
The set $Ta_{n}$ of all prime, connected, triple-crossing projections of all minimal triple-crossing diagrams (up to mirror image) of prime knots and prime nonsplit links (with $n$ triple-crossing, $n>1$) can be generated from $Sh_{2n}$ as follows. Take a shadow from $Sh_{2n}$ and choose a set of its $n$ edges, such that each pair of the edges does not meet at a vertex. Perform on every selected edge the contraction operation (as shown in Figure\;\ref{r04}).
\end{theorem}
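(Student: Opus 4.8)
\emph{Overall plan.} The claim is that the set produced by the procedure coincides with (at least contains) $Ta_{n}$, so the proof has two halves, the essential one being \emph{completeness}: every $P\in Ta_{n}$ arises this way. The common tool is the \emph{expansion} of a triple point, the inverse of the contraction of Figure~\ref{r04} (on shadows, the deconstruction of Figure~\ref{r01}), replacing one triple point by two double points joined by a short new edge. Contraction is deterministic; expansion is multivalued, as one must choose which of the three strands through the triple point becomes the new edge, and for completeness it suffices to exhibit one valid expansion at each triple point. Both operations commute with mirroring the whole diagram, which I would record once so that the ``up to mirror image'' identification on both sides is automatically respected.

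\emph{The easy inclusion.} Given $S\in Sh_{2n}$ and a set $E$ of $n$ edges of $S$ no two of which share a vertex: since $S$ underlies a $4$-valent graph on $2n$ vertices, these $n$ edges touch $2n$ distinct vertices, so $E$ is a \emph{perfect} matching. Contracting all edges of $E$ therefore pairs off the $2n$ double points into $n$ triple points with no leftover double point, and general position makes each merged $6$-valent point a genuine triple-crossing; so the output is a triple-crossing projection with exactly $n$ triple points. Connectedness is inherited because edge contraction does not disconnect the underlying space, and primeness and looplessness of the output follow from the two lemmas below run forwards. (An output need not realise $c_{3}$ of the link it carries, so $Ta_{n}$ is recovered only after discarding the non-minimal outputs; I would note this but it is routine.)

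\emph{The main inclusion (completeness).} Let $P\in Ta_{n}$ be the projection of a minimal triple-crossing diagram $D$ of a prime knot or a prime nonsplit link. Expanding every triple point of $P$ gives a classical projection $Q$ with $2n$ double points and $n$ new edges $e_{1},\dots,e_{n}$; these edges pairwise miss each other's vertices (the two endpoints of $e_{i}$ are the double points born from the $i$-th triple point, and these pairs are disjoint), and contracting $e_{1},\dots,e_{n}$ gives $P$ back. So everything reduces to checking $Q\in Sh_{2n}$: connected (clear, $P$ is and expansion is supported in disjoint disks), loopless, and prime. For looplessness, I would first argue that a minimal triple-crossing diagram has no strand leaving a triple point and returning to it as its very next crossing --- such a reducible triple-crossing is undone by a $T_{1}$-type move, lowering $c_{3}$, against minimality --- and then a local inspection of the two double points coming from one triple point shows that, absent such a loop in $P$, the expansion creates no loop edge, so $Q$ is loopless. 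For primeness, suppose $Q$ were a nontrivial connected sum, witnessed by a simple closed curve $\gamma$ meeting $Q$ transversally in two non-crossing points with nontrivial diagram on each side; I would isotope $\gamma$ off every expansion disk, rerouting it along the disk boundary whenever it meets some $e_{i}$ or enters the $i$-th expansion disk, obtaining a simple closed curve that meets $P$ transversally in two points away from all triple points. Neither side can have degenerated to a trivial diagram --- a degenerate side would have consisted of the local picture of a single former double point, forcing that crossing to be nugatory and contradicting the looplessness just proved --- so $P$ would be a nontrivial connected sum, contrary to hypothesis; the prime/nonsplit hypotheses on a link transfer verbatim.

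\emph{Where the difficulty lies.} The two delicate points are (i) the primeness step, namely carrying out the isotopy that pushes the connected-sum curve off all expansion disks simultaneously while keeping both of its sides nontrivial, so that the reduction to the loopless case is legitimate; and (ii) the lemma underlying looplessness, that a strand returning immediately to its own triple point is removable by a $T_{1}$-type move with a strict decrease of $c_{3}$ --- both to be done with the local model of Figure~\ref{r01} and the moves of Figure~\ref{r27}. One should also pin down which branch of the multivalued expansion to use so that contraction is a genuine inverse (any branch works, but it takes a line to say so). The remaining ingredients --- the counting forcing $E$ to be a perfect matching, heredity of connectedness, and the ``up to mirror image'' bookkeeping --- are routine.
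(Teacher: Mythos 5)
Your overall architecture (the contraction direction is easy; the content is that every $P\in Ta_n$ admits an expansion lying in $Sh_{2n}$) matches the paper's, but both of your key preservation steps assert statements that are actually false, and the real content of the paper's proof is precisely the choice of expansion branch that you defer to a closing one-line remark. Your looplessness lemma fails: a minimal triple-crossing diagram can perfectly well have a strand that leaves a triple point and returns to it at its very next crossing. The paper's own minimal diagrams do (the code for $3_1$ in Table~\ref{table2} contains the adjacent repeated label $5,5$, i.e.\ a loop at that crossing, and the gap between $Th^0_{n}$ and $Th_{n}$ in Table~\ref{table1} shows most minimal projections carry loops). A single loop is removable by a $T_1$-type move only when the looping strand carries the middle label; what the paper actually establishes (first paragraph of its proof, via Proposition~\ref{prop3} and the Hopf-summand observation) is only that a minimal projection cannot have \emph{two} loops at one triple point. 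So $Q$ being loopless is not automatic: a looped triple point must be expanded along the correct strand so that the loop becomes a bigon between the two new double crossings (the paper's Figure~\ref{r21}); the wrong branch leaves a loop edge in $Q$ and exits $Sh_{2n}$.

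Your primeness step has the same structural defect: you try to show that \emph{every} expansion of a prime $P$ is prime, whereas the paper explicitly notes that a prime $p_t$ ``can be the result of contracting some edges from a composite double-crossing projection.'' The flaw is in the rerouting: the boundary of an expansion disk meets $Q$ in six points, so pushing a connected-sum curve that crosses a new edge $e_i$ off that disk does not preserve the property of meeting the diagram in exactly two points (equivalently, after contraction the curve runs through the triple point, and perturbing it off the vertex may force it to cross one or three of the six local strands, and in the three-strand case no contradiction with the primeness of $P$ results). The repair is again a choice of branch: the paper's Figure~\ref{r22} exhibits, at each triple point, an expansion admitting a prime preimage. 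In short, the two items you flag as ``where the difficulty lies'' are not technical refinements of a correct argument; they are points where your argument, as stated, proves false claims, and fixing them requires the explicit local constructions of Figures~\ref{r21} and~\ref{r22}, which constitute essentially the whole of the paper's proof.
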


\begin{proof}
First, notice that if a triple-crossing projection (with more than one triple-crossing) is of a minimal triple-crossing diagram of some prime knot or prime nonsplit link, then it cannot have more than one loop in any triple-crossing. Otherwise, it will be either composite with Hopf link summand (when a two loops in the crossing are non-adjacent) or the triple-crossing can be removed (when a two loops in the crossing are adjacent, see Proof of Proposition\;\ref{prop3}) contradicting the minimality of the diagram.

\begin{figure}[h!t]
    \centering
    \def\svgwidth{0.75\columnwidth}
\begingroup%
  \makeatletter%
  \providecommand\color[2][]{%
    \errmessage{(Inkscape) Color is used for the text in Inkscape, but the package 'color.sty' is not loaded}%
    \renewcommand\color[2][]{}%
  }%
  \providecommand\transparent[1]{%
    \errmessage{(Inkscape) Transparency is used (non-zero) for the text in Inkscape, but the package 'transparent.sty' is not loaded}%
    \renewcommand\transparent[1]{}%
  }%
  \providecommand\rotatebox[2]{#2}%
  \newcommand*\fsize{\dimexpr\f@size pt\relax}%
  \newcommand*\lineheight[1]{\fontsize{\fsize}{#1\fsize}\selectfont}%
  \ifx\svgwidth\undefined%
    \setlength{\unitlength}{551.72457078bp}%
    \ifx\svgscale\undefined%
      \relax%
    \else%
      \setlength{\unitlength}{\unitlength * \real{\svgscale}}%
    \fi%
  \else%
    \setlength{\unitlength}{\svgwidth}%
  \fi%
  \global\let\svgwidth\undefined%
  \global\let\svgscale\undefined%
  \makeatother%
  \begin{picture}(1,0.22615973)%
    \lineheight{1}%
    \setlength\tabcolsep{0pt}%
    \put(0,0){\includegraphics[width=\unitlength,page=1]{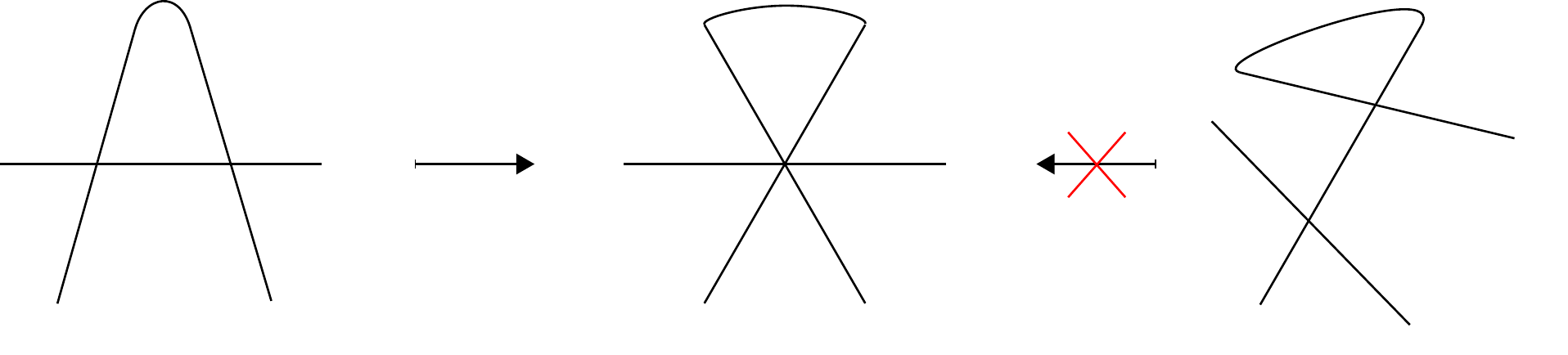}}%
    \put(0.08637692,0.00401086){\color[rgb]{0,0,0}\makebox(0,0)[lt]{\lineheight{1.25}\smash{\begin{tabular}[t]{l}$\scriptstyle p_d$\end{tabular}}}}%
    \put(0.4693255,0.00401086){\color[rgb]{0,0,0}\makebox(0,0)[lt]{\lineheight{1.25}\smash{\begin{tabular}[t]{l}$\scriptstyle p_t$\end{tabular}}}}%
    \put(0.81456581,0.00401086){\color[rgb]{0,0,0}\makebox(0,0)[lt]{\lineheight{1.25}\smash{\begin{tabular}[t]{l}$\scriptstyle p'_d$\end{tabular}}}}%
  \end{picture}%
\endgroup%

    \caption{A contraction to a loop.}
    \label{r21}
\end{figure}

Assume now that a prime triple-crossing projection $p_t$ has some triple crossings each with only one loop. Then there is a prime double-crossing projection $p_d$ without loops that contracts to $p_t$. Each such triple-crossing loop can be obtained as in Figure\;\ref{r21}.
\par
It is straightforward that a contraction operation on a prime, connected graph results in a prime, connected graph. Assume now we have a triple-crossing projection $p_t$ that is prime and connected. It clearly cannot be generated from a disconnected projection by a contraction operation, but can be the result of contracting some edges from a composite double-crossing projection $p'_d$. Instead, there is a prime double-crossing projection $p_d$ that contracts to $p_t$, each such triple-crossing can be obtained as in Figure\;\ref{r22}.

\begin{figure}[h!t]
    \centering
    \def\svgwidth{0.75\columnwidth}
\begingroup%
  \makeatletter%
  \providecommand\color[2][]{%
    \errmessage{(Inkscape) Color is used for the text in Inkscape, but the package 'color.sty' is not loaded}%
    \renewcommand\color[2][]{}%
  }%
  \providecommand\transparent[1]{%
    \errmessage{(Inkscape) Transparency is used (non-zero) for the text in Inkscape, but the package 'transparent.sty' is not loaded}%
    \renewcommand\transparent[1]{}%
  }%
  \providecommand\rotatebox[2]{#2}%
  \newcommand*\fsize{\dimexpr\f@size pt\relax}%
  \newcommand*\lineheight[1]{\fontsize{\fsize}{#1\fsize}\selectfont}%
  \ifx\svgwidth\undefined%
    \setlength{\unitlength}{570.06867147bp}%
    \ifx\svgscale\undefined%
      \relax%
    \else%
      \setlength{\unitlength}{\unitlength * \real{\svgscale}}%
    \fi%
  \else%
    \setlength{\unitlength}{\svgwidth}%
  \fi%
  \global\let\svgwidth\undefined%
  \global\let\svgscale\undefined%
  \makeatother%
  \begin{picture}(1,0.45231957)%
    \lineheight{1}%
    \setlength\tabcolsep{0pt}%
    \put(0,0){\includegraphics[width=\unitlength,page=1]{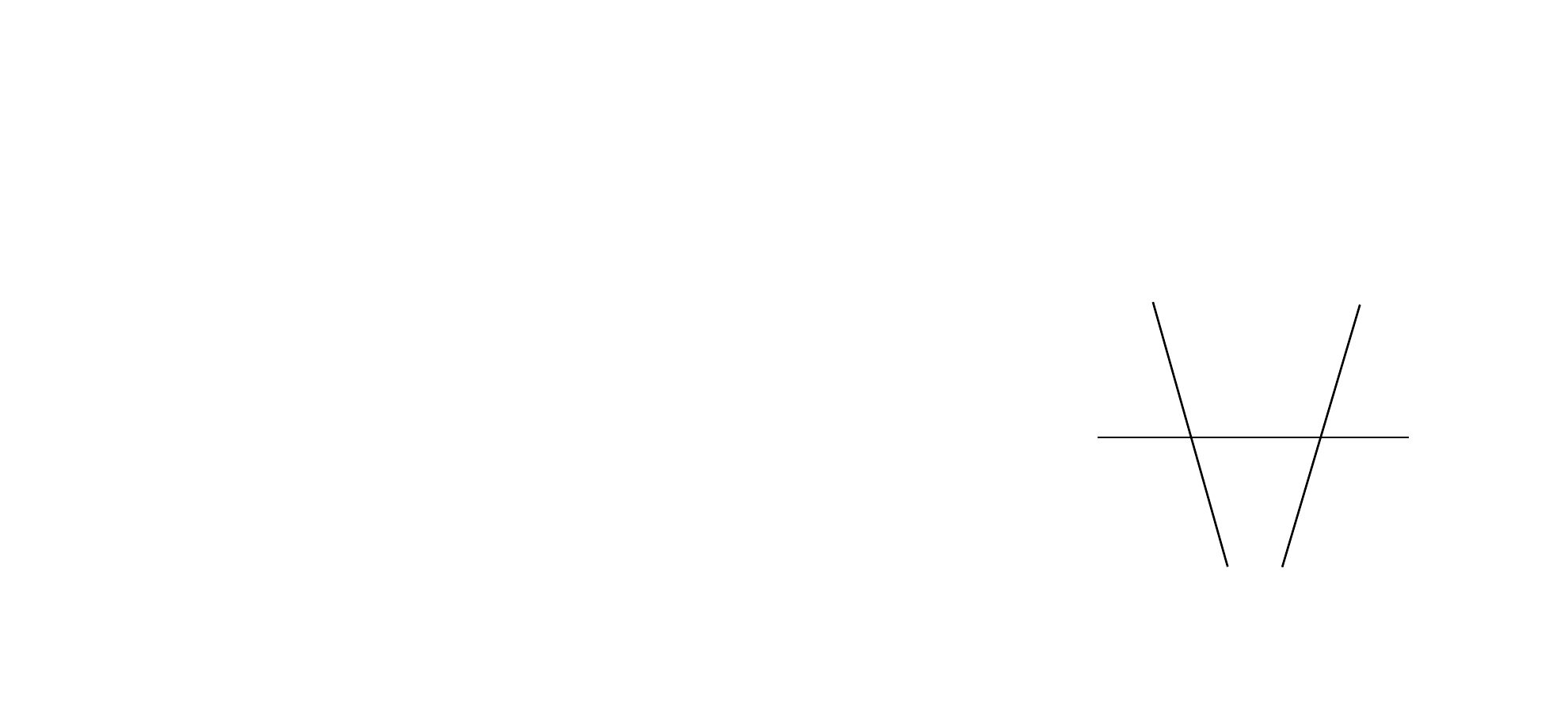}}%
    \put(0.08543062,0.0038818){\color[rgb]{0,0,0}\makebox(0,0)[lt]{\lineheight{1.25}\smash{\begin{tabular}[t]{l}$\scriptstyle p_d$\end{tabular}}}}%
    \put(0.49172753,0.0038818){\color[rgb]{0,0,0}\makebox(0,0)[lt]{\lineheight{1.25}\smash{\begin{tabular}[t]{l}$\scriptstyle p_t$\end{tabular}}}}%
    \put(0.80653654,0.0038818){\color[rgb]{0,0,0}\makebox(0,0)[lt]{\lineheight{1.25}\smash{\begin{tabular}[t]{l}$\scriptstyle p'_d$\end{tabular}}}}%
    \put(0,0){\includegraphics[width=\unitlength,page=2]{LM22.pdf}}%
  \end{picture}%
\endgroup%

    \caption{A contraction to a prime projection.}
    \label{r22}
\end{figure}

\end{proof}

After this contraction operations, we associate to each triple-point projection from $Ta_{n}$ its \emph{sPD code} (a six-valent Planar Diagram code), consisting of $n$ elements from the underlying classical $PD$ code consisting of $2n$ elements, by first removing the labels of every contracting edge and then merging two codes for the endpoints of this edge (with the new name $eX$ of the vertex), as shown in Figure\;\ref{r04}.

\begin{figure}[h!t]
    \centering
    \def\svgwidth{1.2\columnwidth}
    \import{./figs/}{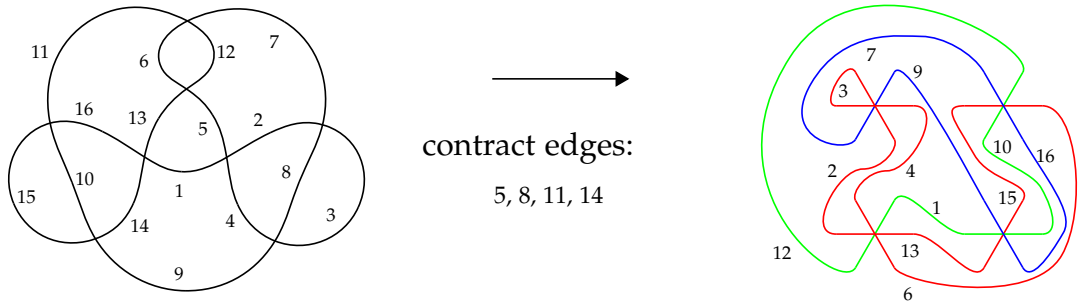}
    \caption{An example of transformation from a classical projection to a triple-point projection}
    \label{r17}
\end{figure}

Then, we re-numerate each $sPD$ code with consecutive positive integers from one up to three times the number of triple-crossings, with increasing labels as we go around each transverse circular component in the triple-projection. An example is presented in Figure\;\ref{r17}, where a knot diagram with code $\scriptstyle PD[X[1,4,2,5],X[3,8,4,9],X[12,6,13,5],X[13,16,14,1],X[9,14,10,15],X[15,10,16,11],X[6,12,7,11],X[7,2,8,3]]$ is transformed by contracting four edges labeled: $5,8,11,14$, to a three-component link diagram with code $$sPD[eX[1,4,2,12,6,13],eX[4,9,3,3,7,2],eX[15,10,16,6,12,7],eX[1,13,16,10,15,9]].$$ The enumeration of elements of the set $Ta_{n}$ is presented in Table\;\ref{table1} (third column).

\pagebreak

\begin{center}
\begin{longtable}{@{}r||r|r|r|r|r|r|r|r|r@{}}
\caption{Enumeration of shadows and diagrams.\label{table1}}\\
	$n$&$Sh_{2n}$&$Ta_{n}$&$Tb_{n}$&$Gr_{n}$&$Th^0_{n}$&$Th_{n}$&$TD_{n}$&$K_{n}$&$L_{n}$\\
	\hline
2&2&14&4&2&1&3&108&2&2\\
3&9&108&18&4&1&9&1.944&2&11\\
4&62&1.312&222&20&5&57&73.872&24&70\\
5&803&29.198&&&12&&&$\geq$ 94&$\geq$ 87\\
6&15.882&1.007.494&&&&&&&\\
	\hline
\end{longtable}
\end{center}

\subsection{Checking an isotopy type and loop moves}

Now, we create a set $Tb_{n}$ of those projections from $Ta_{n}$, that each pair of shadows from $Tb_{n}$ are neither isotopic nor is one of them isotopic to the mirror image of the other. For our algorithm, we use the following (well-known) general method for testing spherical graphs isotopy (see \cite{WalLeh72}).

\begin{figure}[h!t]
    \centering
    \def\svgwidth{1.15\columnwidth}
    \import{./figs/}{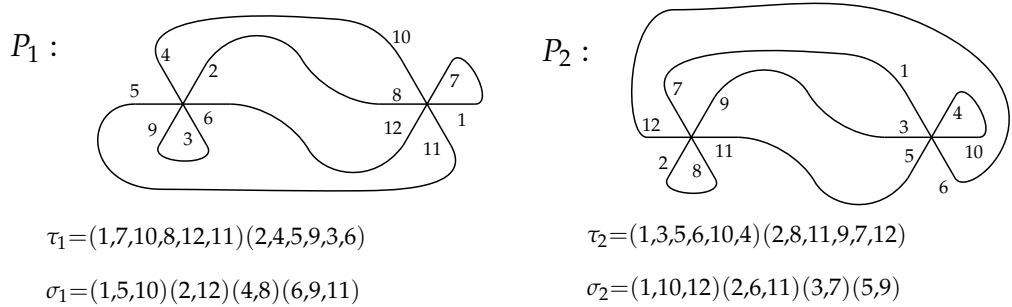}
    \caption{An example of isotopic projections on the sphere.}
    \label{r10}
\end{figure}

Any isotopy class of connected graphs with $e$ number of edges embedded in the sphere corresponds to a pair of permutations $\sigma,\tau$
on $2e$ elements, such that $\sigma\cdot\tau$ has order two, and $\sigma,\tau$ are determined up to simultaneous conjugacy. The correspondence is as follows. First, we label each side of each edge of a graph with a distinct number from $1$ to $2e$ arbitrarily.
\par
To define $\tau$, we make a cycle for each vertex $v$, which involves only the clockwise sides of each edge adjacent to $v$, and rotates each one step counterclockwise. To define $\sigma$, we make a cycle for each face $f$, which rotates all the edge labels that $f$ encounters each step counterclockwise (from $f$'s point of view). From the permutations $\sigma,\tau$, we can reconstruct the isotopy class by gluing together the faces, edges, and vertices according to the permutations. In the examples in Figure\;\ref{r10} the projections $P_1$ and $P_2$ are isotopic by a conjugacy permutation $s=(1, 6, 5, 4, 3, 2) (7, 12, 11, 10, 9, 8)$. The enumeration of elements of the set $Tb_{n}$ is presented in Table\;\ref{table1} (fourth column).

\begin{lemma}[\cite{Ada13}]\label{lem01}
Any clasp (with two classical crossings) can be locally transformed (by using an ambient isotopy in $\mathbb{R}^3$) to a one triple-crossing, as in Figure\;\ref{r03}.
\end{lemma}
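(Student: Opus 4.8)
The plan is to realize the claimed transformation as an ambient isotopy of $\mathbb{R}^3$ supported in a small ball $B$ around the clasp, equal to the identity near $\partial B$; this way the rest of the diagram, and hence the link type, is untouched, and one only has to check the local picture.

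First I would fix the local model: coordinates on $B$ with the projection along the $z$-axis, and a clasp consisting of two arcs $\alpha,\beta$ that meet $\partial B$ in four points (call them NW, NE, SW, SE), cross twice in the projection at an upper point $c_1$ and a lower point $c_2$, and bound a projected bigon whose two sides are a subarc $e_\alpha\subset\alpha$ and a subarc $e_\beta\subset\beta$. Up to the symmetry exchanging $\alpha$ with $\beta$ and the mirror symmetry reversing all crossings there are only two over/under patterns to record, and I would treat them in parallel.

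The transformation itself is the contraction operation of Figure\;\ref{r04} applied to the edge $e_\alpha$: shrink $e_\alpha$ to a point, identifying $c_1$ with $c_2$ into a single vertex $v$. After this, $\alpha$ runs once straight through $v$ (from NW to SW), while $\beta$ passes through $v$ twice, joined around the loop coming from $e_\beta$, so $v$ becomes a six-valent vertex carrying exactly one loop -- the special shape of Figure\;\ref{r21}. To see that this is realized by an ambient isotopy and not merely a quotient, I would first, still inside $B$, use small vertical pushes to make the $z$-coordinate of $\alpha$ constant along the region to be collapsed and to put the two local branches of $\beta$ at two fixed heights (with $e_\beta$ interpolating off to one side); then shrinking $e_\alpha$ inside that horizontal plane is visibly an isotopy carrying $c_1$ and $c_2$ together, and extending it by the identity gives the ambient isotopy of $\mathbb{R}^3$.

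Finally I would check that what appears at $v$ is an honest triple-crossing with a well-defined top, middle and bottom strand, as in Figure\;\ref{r03}. The three local branches at $v$ are $\alpha$ and the two branches of $\beta$, and the crossing data of the original clasp forces exactly one height relation between $\alpha$ and the first $\beta$-branch (coming from $c_1$) and exactly one between $\alpha$ and the second $\beta$-branch (coming from $c_2$); these involve two distinct pairs out of three strands, so they are never in conflict, and such a partial order always extends to a total order, which a final small vertical isotopy inside $B$ can realize. The only real bookkeeping -- and the step I expect to be the most delicate -- is tracking the two patterns of crossing data and confirming in each that no contradictory height relation is forced; as just explained this reduces to the triviality that a two-relation order on three elements has a linear extension, so there is in effect no serious obstacle once the local model is pinned down and the support of the isotopy is kept strictly inside $B$.
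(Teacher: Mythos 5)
Your proof is correct and follows exactly the construction the paper (and its cited source \cite{Ada13}) has in mind: contract the bigon edge of the clasp so the two classical crossings merge into a six-valent point, and note that the two height relations forced by the clasp involve distinct pairs of the three local strands and hence extend to a total $T,M,B$ order, with the single-pass strand becoming the middle one in the genuinely linked case. The paper itself supplies no proof --- Lemma~\ref{lem01} is quoted from \cite{Ada13} and merely illustrated by Figures~\ref{r03} and~\ref{r21} --- and your argument is precisely the local ambient isotopy those figures depict.
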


\begin{figure}[h!t]
    \centering
    \def\svgwidth{0.5\columnwidth}
    \import{./figs/}{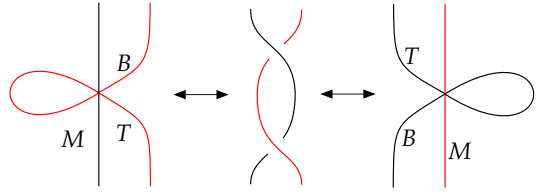}
    \caption{Exchanging two classical crossing with one triple-crossing.}
    \label{r03}
\end{figure}

\begin{proposition}\label{prop3}
Local moves $M1$, $M2$ (see Figure\;\ref{r02}.) on a triple-crossing projection, do not change a knot or a link type of any triple-crossing diagram having that projection (with a proper $T,M,B$ labeling after each move).
\end{proposition}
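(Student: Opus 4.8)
The plan is to handle the two moves $M1$ and $M2$ of Figure~\ref{r02} separately, and for each to show that for every valid $T,M,B$ labeling of the triple-crossings on one side of the move there is a $T,M,B$ labeling of the triple-crossings on the other side yielding an ambient-isotopic link in $\mathbb{R}^3$. Both moves are supported in a disk, so only the tangle inside that disk changes, and it suffices to produce, for each labeling, a local ambient isotopy of $\mathbb{R}^3$ carrying one labeled local picture to the other.

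The main tool is the deconstruction of a triple-crossing into classical crossings from Figure~\ref{r01}, together with its converse, Lemma~\ref{lem01}. First I would replace every triple-crossing inside the move-disk by the classical tangle it encodes; this tangle depends only on the cyclic order of the strands $T,M,B$, so $M1$ and $M2$ each become a finite list of identities between ordinary link tangles, one per labeling. Next I would verify each identity by Reidemeister moves and planar isotopies supported in the disk; a loop at a triple-crossing deconstructs to a kink, so the actual loop removal is carried out by Reidemeister~I. Finally I would recombine the surviving classical crossings into triple-crossings, which reads off the $T,M,B$ labeling on the output side — the phrase ``with a proper $T,M,B$ labeling after each move'' in the statement is exactly the claim that this recombination is always available.

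Most of the effort is bookkeeping: up to the symmetries of a triple-crossing and of each move's local picture there are only finitely many labelings to examine, and for the move that deletes a triple-crossing carrying two loops one must additionally separate the case where the two loops at the crossing are adjacent (the case at hand) from the case where they are non-adjacent, which instead produces a Hopf-link connected summand and is therefore excluded — the same dichotomy already used in the proof that $Ta_n$ is generated from $Sh_{2n}$. The genuinely delicate step is not any individual Reidemeister calculation but checking, uniformly across all labelings, that after simplification the leftover classical crossings really do assemble back into a legitimate triple-crossing, i.e.\ that their over/under data is compatible with some ordering $T<M<B$; it is this compatibility, rather than the isotopy itself, that makes the proposition nontrivial.
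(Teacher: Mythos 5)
Your overall strategy --- deconstruct every triple-crossing in the move-disk into its three classical crossings (Figure~\ref{r01}), verify the resulting tangle identities by Reidemeister moves, and reassemble --- is viable, and it is essentially the ``unfolded'' version of what the paper does: the paper's proof also works locally, labeling by labeling, but it packages the classical-level isotopies into the $T_1$ move of Figure~\ref{r27} (imported from \cite{AHP19}) and into Lemma~\ref{lem01} instead of redoing them by hand. The organizational difference is that the paper avoids a blind enumeration of all labelings by one combinatorial observation: for $M1$ the four loop endpoints occupy four consecutive positions among the six endpoints of the vertex, while the two endpoints of the middle passage are antipodal, so at least one loop must have the $M$-passage as one of its arcs and a single $T_1$ move kills the crossing; for $M2$ the only genuine dichotomy is whether the loop involves the $M$-passage or is formed by the $T$- and $B$-passages.

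The place where your proposal goes wrong is the one concrete mechanism it commits to: ``a loop at a triple-crossing deconstructs to a kink, so the actual loop removal is carried out by Reidemeister~I.'' After deconstruction a loop gives a kink \emph{plus} two crossings of the third passage with the looping arc, and the kink is the trivial part. When the loop involves the $M$-passage, the third strand lies entirely over (or entirely under) both arcs of the loop, so a Reidemeister~II followed by Reidemeister~I frees everything --- this is the removable case, and it is exactly what the $T_1$ move encodes. When the loop is formed by the $T$- and $B$-passages, the third strand goes under one arc of the loop and over the other: this is precisely the clasp of Figure~\ref{r03}, it is \emph{not} removable, and the content of $M2$ in that case is that the clasp (hence the loop) slides through to the other side of the crossing via Lemma~\ref{lem01}. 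As stated, your plan predicts that every loop dies by Reidemeister~I, which is false and would turn $M2$ into a crossing-removing move rather than a loop-sliding move; the case distinction you defer to ``bookkeeping'' is the actual content of the proposition. Finally, the adjacent/non-adjacent dichotomy you invoke for $M1$ is not needed here: the non-adjacent configuration is not part of the move $M1$ as drawn in Figure~\ref{r02}; it is excluded earlier, in the generation of $Ta_n$.
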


\begin{figure}[h!t]
    \centering
    \def\svgwidth{0.5\columnwidth}
\begingroup%
  \makeatletter%
  \providecommand\color[2][]{%
    \errmessage{(Inkscape) Color is used for the text in Inkscape, but the package 'color.sty' is not loaded}%
    \renewcommand\color[2][]{}%
  }%
  \providecommand\transparent[1]{%
    \errmessage{(Inkscape) Transparency is used (non-zero) for the text in Inkscape, but the package 'transparent.sty' is not loaded}%
    \renewcommand\transparent[1]{}%
  }%
  \providecommand\rotatebox[2]{#2}%
  \newcommand*\fsize{\dimexpr\f@size pt\relax}%
  \newcommand*\lineheight[1]{\fontsize{\fsize}{#1\fsize}\selectfont}%
  \ifx\svgwidth\undefined%
    \setlength{\unitlength}{334.68651496bp}%
    \ifx\svgscale\undefined%
      \relax%
    \else%
      \setlength{\unitlength}{\unitlength * \real{\svgscale}}%
    \fi%
  \else%
    \setlength{\unitlength}{\svgwidth}%
  \fi%
  \global\let\svgwidth\undefined%
  \global\let\svgscale\undefined%
  \makeatother%
  \begin{picture}(1,0.61519232)%
    \lineheight{1}%
    \setlength\tabcolsep{0pt}%
    \put(0,0){\includegraphics[width=\unitlength,page=1]{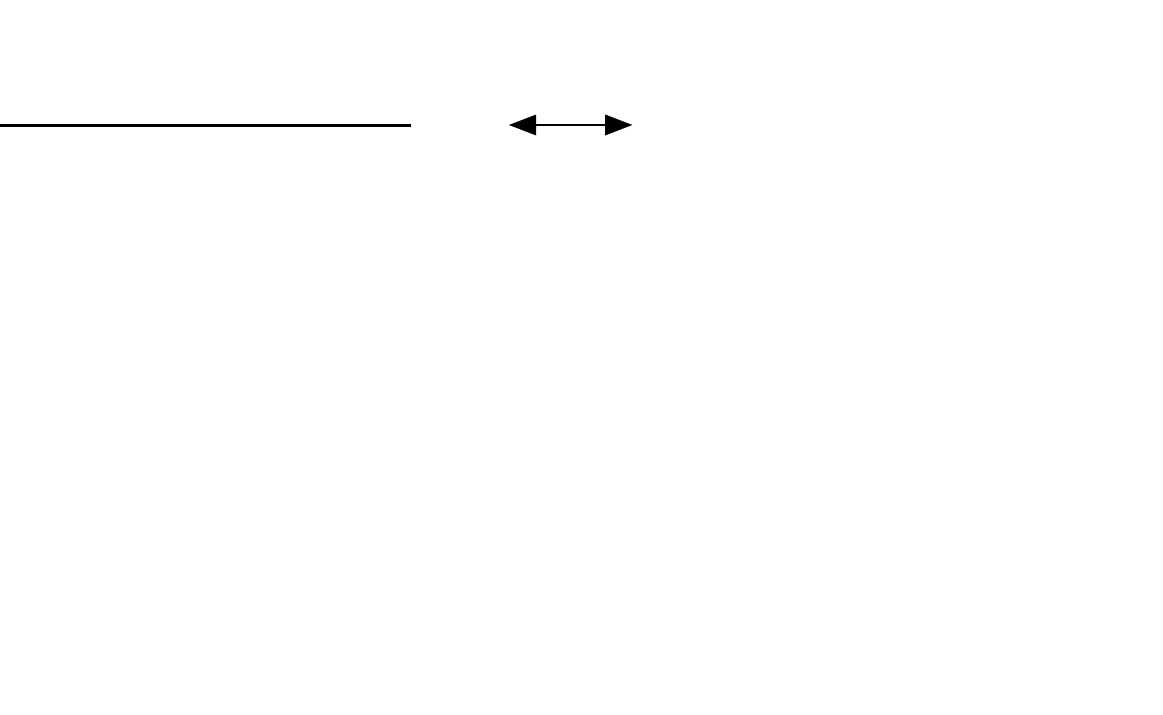}}%
    \put(0.47213007,0.53735322){\color[rgb]{0,0,0}\makebox(0,0)[lt]{\lineheight{1.25}\smash{\begin{tabular}[t]{l}$M1$\end{tabular}}}}%
    \put(0,0){\includegraphics[width=\unitlength,page=2]{LM02.pdf}}%
    \put(0.47461184,0.25952841){\color[rgb]{0,0,0}\makebox(0,0)[lt]{\lineheight{1.25}\smash{\begin{tabular}[t]{l}$M2$\end{tabular}}}}%
    \put(0,0){\includegraphics[width=\unitlength,page=3]{LM02.pdf}}%
  \end{picture}%
\endgroup%

    \caption{Moves $M1$ and $M2$.}
    \label{r02}
\end{figure}

\begin{proof}
In the $M_1$ case of having two adjacent loops in a triple-crossing diagram of a knot or link attached to one vertex, the four endpoints of the loops are adjacent, therefore one of them is labeled $M$, which allows us to make a type $T_1$ move (or its mirror move) to reduce that triple-crossing. In the $M_2$ case, if one of the strands forming a loop has label $M$ then we make first a type $T_1$ move (or its mirror move) to reduce the triple-crossing and then we make another move creating one triple-crossing with a loop in the opposite side. If the strands forming a loop are labeled $B, T$ then we make a move as in Lemma\;\ref{lem01} and push the loop to the other side of the triple-crossing with proper $T,M,B$ labeling after the move.
\end{proof}

Computational results led us to the following tabulation of knots and links triple-projections.

\begin{proposition}\label{projections}
All prime, connected, triple-crossing projections, up to spherical isotopy, up to mirror image and up to moves $M1$, $M2$, with one, two and three triple-points are presented in Figure\;\ref{r06}. The case for four triple-points is presented in Figures\;\ref{r15}--\ref{r13}. The enumeration of elements of this set, denoted $Th_{n}$, is presented in Table\;\ref{table1} (seventh column).
\end{proposition}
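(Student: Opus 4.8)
The plan is to prove this by exhaustive, computer-assisted enumeration, using the preceding theorem (which generates $Ta_n$ from $Sh_{2n}$ by contraction) as the starting point and then quotienting by the two remaining equivalences: spherical isotopy together with mirror image, and the local moves $M1$, $M2$.

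First I would produce $Sh_{2n}$ --- the reduced, prime, connected classical-crossing projections with $2n$ crossings, without loops and without mirror duplicates --- exactly as in \cite[Section~3]{Jab19}, using the counts $|Sh_{2n}|$ in the second column of Table~\ref{table1} as a sanity check. For each shadow in $Sh_{2n}$ I would enumerate every set of $n$ pairwise non-adjacent edges (no two sharing a vertex) and perform the contraction operation of Figure~\ref{r04} on each, recording the resulting $sPD$ codes; by the theorem generating $Ta_n$ this yields the set $Ta_n$, which contains every prime connected triple-crossing projection underlying a minimal triple-crossing diagram of a prime knot or prime nonsplit link, with cardinalities $14$, $108$, $1312$ for $n=2,3,4$. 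Next I would reduce $Ta_n$ modulo spherical isotopy and mirror image via the Walsh--Lehman permutation encoding described in this section: to each projection associate its pair $(\sigma,\tau)$ on the $2e$ side-labels, and to its mirror the pair obtained by reversing the rotation scheme at every vertex; compute a canonical representative of each simultaneous-conjugacy class (for instance the lexicographically least pair over all relabelings, via an orbit--stabilizer normal form) and bucket. The distinct buckets form $Tb_n$, with $|Tb_2|=4$, $|Tb_3|=18$, $|Tb_4|=222$.

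Then I would quotient $Tb_n$ by the moves $M1$ and $M2$. Since by Proposition~\ref{prop3} these preserve the knot or link type of every diagram supported on the projection --- and they visibly preserve primeness and connectedness --- I would build the graph on $Tb_n$ whose edges join two projections related by a single $M1$ or $M2$ move (followed by re-normalization under isotopy and mirror), compute its connected components by union-find or breadth-first search, and select one representative per component. Drawing these representatives out gives precisely Figure~\ref{r06} for $n=1,2,3$ and Figures~\ref{r15} through~\ref{r13} for $n=4$, and counting the components returns $Th_n$: namely $3$, $9$, and $57$, in agreement with the seventh column of Table~\ref{table1}. Exhaustiveness --- that the list omits no prime connected triple-crossing projection, even one not lying in $Ta_n$ --- follows from the opening argument in the proof of the $Ta_n$ theorem together with Proposition~\ref{prop3}: a triple-crossing carrying two loops is either composite with a Hopf summand (excluded by primeness) or can be collapsed or relocated by an $M1$ or $M2$ move, so after finitely many such moves any prime connected triple-crossing projection is brought into a form underlying a minimal diagram of a prime knot or prime nonsplit link, hence into $Ta_n$.

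The main obstacle I expect is the scale of the search and its certification, rather than any single hard idea: $|Sh_{12}|$ is already about $1.6\times 10^{4}$ and $|Ta_6|$ about $10^{6}$, so the canonical-form routine used for the isotopy/mirror test and the exhaustive search for applicable $M1$, $M2$ moves must be simultaneously correct and efficient. Two points need particular care: that the chosen normal form genuinely separates non-isotopic projections and correctly pairs each projection with its mirror, so that $Tb_n$ is neither over- nor under-counted; and that the $M1/M2$ move-graph is explored to completion, so that no equivalence class is spuriously split by a missed move. A smaller but real subtlety is the exhaustiveness argument above, which reduces to a finite check of the possible multi-loop configurations at a single vertex but must be stated carefully so as to cover every $T,M,B$ labeling.
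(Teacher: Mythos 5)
Your proposal reproduces essentially the same computational pipeline the paper uses: generate $Sh_{2n}$, contract non-adjacent edge sets to obtain $Ta_n$, reduce modulo spherical isotopy and mirror image via the Walsh--Lehman permutation encoding to obtain $Tb_n$, and then quotient by the $M1$, $M2$ moves (justified by Proposition~\ref{prop3}) to obtain $Th_n$, checking the counts against Table~\ref{table1}. The paper offers no further argument beyond this computation, so your reconstruction, including the care you flag about canonical forms and exhausting the move graph, matches its approach.
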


\begin{figure}[h!t]
    \centering
		\caption{Table of projections of $1$,$2$ or $3$ triple points.}
    \def\svgwidth{0.85\columnwidth}
    \import{./figs/}{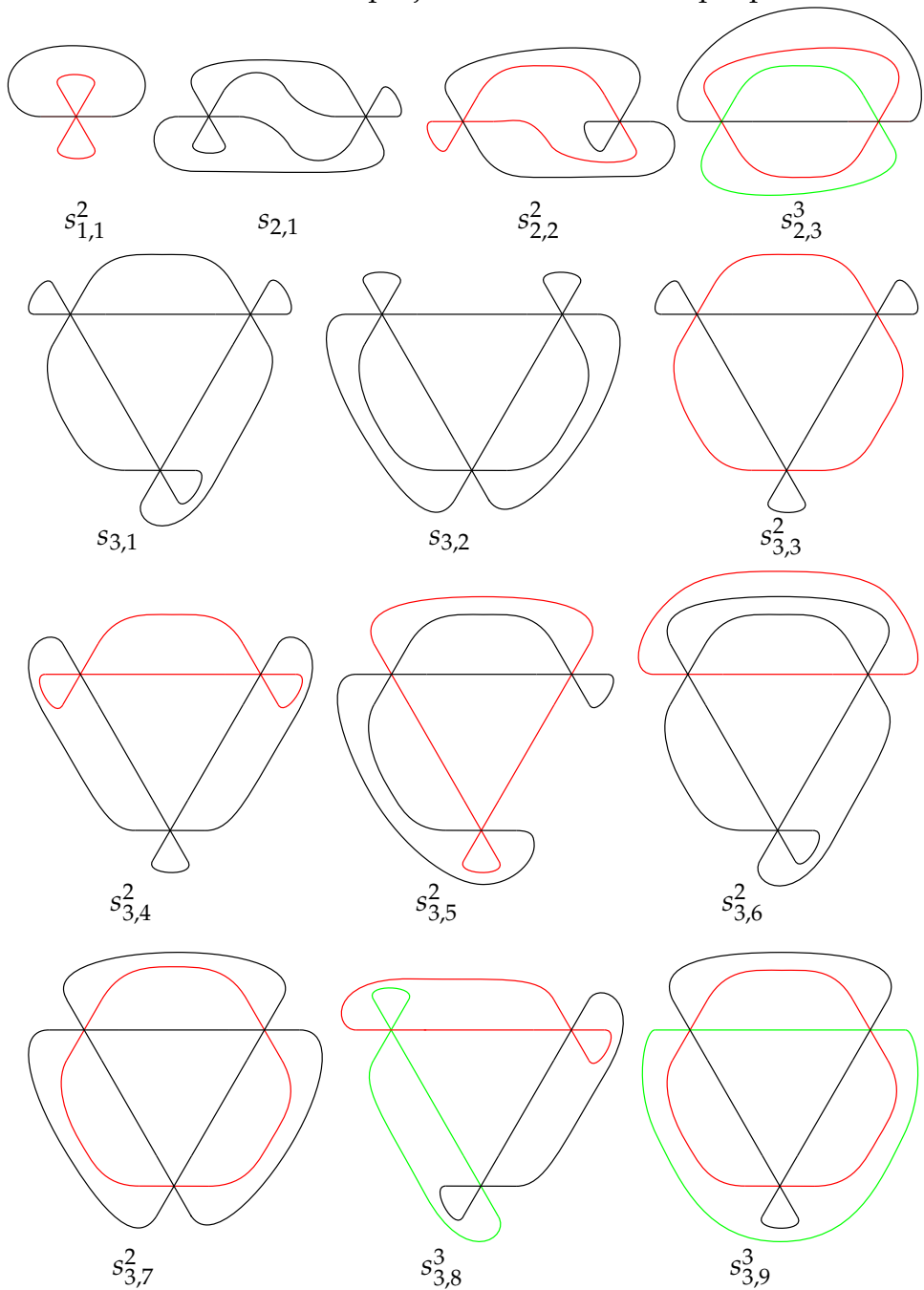}
    \label{r06}
\end{figure}

The enumeration of elements of the subset of the set $Th_{n}$ consisting only of projections without loops, denoted $Th^0_{n}$, is presented in Table\;\ref{table1} (sixth column).

\subsection{Dual graphs and projections}

Define $\mathcal{G}$ as a set of all planar, simple, connected graph embeddings, which vertices are of degree at least two, such that each face is a quadrangle or a hexagon (including the external face). If such a graph has $q$ quadrangles and $h$ hexagons we label it $g^{q,h}_n$, where $n$ is the index of the graph generated within the family of graphs with the same pair $q,h$.
\par
We can generate all prime, connected, triple-crossing projections (up to mirror image) of all minimal triple-crossing diagrams of knots and links (with more than one triple-crossing) as follows. First, we take the dual graph to each graph from the set $\mathcal{G}$ and then we put one loop on each vertex of the valency four. 
\par
This method is useful in a tabulation of projections as in Theorem\;\ref{projections}, but we cannot find known tabulations (or a generating function for an enumeration) of these spherical partitions. Instead, we use our earlier method of generating triple-crossing projections to produce such a table for the following cases.

\begin{proposition}
All elements of $\mathcal{G}$ with two or three faces are presented in Figure\;\ref{r07}. The case of four-face graphs is presented in Figure\;\ref{r08}. The enumeration of elements of the set $G$ with $n$ faces, denoted $Gr_n$ is presented in Table\;\ref{table1} (fifth column).
\end{proposition}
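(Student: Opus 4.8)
The plan is to convert the statement into a finite enumeration by way of Euler's formula, settle the cases $n=2,3$ directly, and deduce the case $n=4$ either from the same kind of finite case analysis or from the generation procedure of Section~\ref{s3}.

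First I would record the basic numerology. If $g=g^{q,h}_n\in\mathcal G$ has $q$ quadrangular and $h$ hexagonal faces (so $n=q+h$), then counting edge--face incidences gives $2E=4q+6h$, hence $E=2q+3h$, and Euler's formula on the sphere then forces $V=2+q+2h$ and first Betti number $\beta_1(g)=E-V+1=n-1$. Since every vertex of $g$ has degree at least two, for $n\ge 3$ the graph $g$ is obtained by subdividing edges of a connected multigraph $\hat g$ in which every vertex has degree at least three and with $\beta_1(\hat g)=n-1$; combining $2\hat E=\sum_v\deg v\ge 3\hat V$ with $\hat E-\hat V=n-2$ gives $\hat V\le 2(n-2)$, so for each $n$ there are only finitely many candidate reduced graphs $\hat g$, and for each of them the admissible subdivisions are exactly those whose every face (including the unbounded one) has length $4$ or $6$ and for which the resulting graph is simple. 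The faces of $g$ correspond to the faces of $\hat g$, their lengths being the corresponding sums of path lengths, so this becomes a small linear/Diophantine problem for each $\hat g$.

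For $n=2$ we have $\beta_1(g)=1$, so $g$ is a simple cycle $C_k$; its two faces both have length $k$, whence $k\in\{4,6\}$ and $g\in\{C_4=g^{2,0}_1,\ C_6=g^{0,2}_1\}$, giving $Gr_2=2$. For $n=3$ we have $\hat V\le 2$, so $\hat g$ is the theta graph, the handcuff graph, or the figure-eight graph; in the handcuff the unbounded face has length at least $8$, and in the figure-eight avoiding triangular faces forces both cycles to have length at least $4$, so its unbounded face has length at least $8$ as well, so both are excluded. Hence $g$ is a subdivided theta graph, determined by the path lengths $a\le b\le c$ (with at most one equal to $1$ by simplicity) and having faces of lengths $a+b$, $a+c$, $b+c$; requiring each to lie in $\{4,6\}$ leaves precisely $(a,b,c)\in\{(2,2,2),(1,3,3),(2,2,4),(3,3,3)\}$, that is $g\in\{g^{3,0}_1,g^{2,1}_1,g^{1,2}_1,g^{0,3}_1\}$, so $Gr_3=4$. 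These six graphs are exactly those drawn in Figure~\ref{r07}.

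For $n=4$ we have $\beta_1(g)=3$ and $\hat V\le 4$, so $\hat g$ ranges over an explicit short list of reduced multigraphs (the connected cubic multigraphs on four vertices, and multigraphs on three or two vertices of degree sequences $(3,3,4)$, respectively $(4,4)$ and $(3,5)$), and one then solves, for each, the finite system for the subdivided path lengths subject to the ``faces of length $4$ or $6$'' and simplicity conditions. Rather than carrying this out by hand, we invoke the dual-graph correspondence recalled just before the statement: sending $g$ to its spherical dual and adding one loop at each four-valent vertex identifies $\mathcal G$ with a family of prime connected triple-crossing projections of minimal triple-crossing diagrams, which has already been generated algorithmically in Section~\ref{s3} (the lists underlying Proposition~\ref{projections}); deleting, in each such projection, the loop at every six-valent vertex that arose from a quadrangular face and dualizing back recovers all of $\mathcal G$. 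This yields the $20$ embedded graphs of Figure~\ref{r08}, hence $Gr_4=20$, and the three computed values fill in the fifth column of Table~\ref{table1}. The main obstacle is exactly this $n=4$ step: establishing both completeness (no admissible subdivision type of any reduced $\hat g$ is overlooked) and irredundancy (the listed embeddings are pairwise non-isotopic on the sphere, also under reflection, so that the convention chosen for $\mathcal G$ is applied consistently). Here the computer enumeration of Section~\ref{s3}, cross-checked against the by-hand list of reduced graphs $\hat g$ and against the tabulation in Figure~\ref{r06}, is what does the real work.
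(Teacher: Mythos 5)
The paper offers no proof of this proposition at all: the values $Gr_2=2$, $Gr_3=4$, $Gr_4=20$ are asserted as the output of the computational pipeline of Section~\ref{s3} (the authors explicitly say they could not find a known tabulation and instead "use our earlier method of generating triple-crossing projections to produce such a table"), followed by dualization. Your proposal is therefore genuinely different, and stronger, for the small cases: the Euler-formula bookkeeping ($E=2q+3h$, $V=2+q+2h$, $\beta_1=n-1$), the reduction to a finite list of suppressed multigraphs $\hat g$ with $\hat V\le 2(n-2)$, and the explicit solution of the face-length constraints correctly and self-containedly yield the two graphs for $n=2$ and the four subdivided thetas $(2,2,2),(1,3,3),(2,2,4),(3,3,3)$ for $n=3$, with the handcuff and figure-eight rightly excluded; this is a hand-checkable derivation the paper does not supply. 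For $n=4$ you, like the paper, ultimately lean on the computer enumeration, and you are candid that this is where the real work lies. The one point to be careful about is your claim that dualizing the generated projections "recovers all of $\mathcal G$": the correspondence between $\mathcal G$ and the projection tables is not a clean bijection (note $|Th_4|=57$ while $Gr_4=20$; only the loopless subfamily $Th^0_n$ matches the all-hexagon graphs directly, since a loop at a four-valent dual vertex can sit in either adjacent face), so completeness of the $n=4$ list requires both the completeness of $Th_4$ and the observation that, up to the moves $M1,M2$ and mirror image, the loop placement is immaterial — i.e., every element of $\mathcal G$ with four faces does occur as the loop-stripped dual of some tabulated projection. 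With that caveat made explicit (or with the by-hand $\hat g$ case analysis you sketch actually carried out), your argument matches the paper's conclusion and improves on its exposition.
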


Elements of the subset of the set $Gr_{5}$, consisting only of projections with hexagons, are presented in Figure\;\ref{r09}. The dual graphs to this graphs are projections forming the set $Th^0_{5}$.

\begin{figure}[h!t]
    \centering
    \def\svgwidth{\columnwidth}
\begingroup%
  \makeatletter%
  \providecommand\color[2][]{%
    \errmessage{(Inkscape) Color is used for the text in Inkscape, but the package 'color.sty' is not loaded}%
    \renewcommand\color[2][]{}%
  }%
  \providecommand\transparent[1]{%
    \errmessage{(Inkscape) Transparency is used (non-zero) for the text in Inkscape, but the package 'transparent.sty' is not loaded}%
    \renewcommand\transparent[1]{}%
  }%
  \providecommand\rotatebox[2]{#2}%
  \newcommand*\fsize{\dimexpr\f@size pt\relax}%
  \newcommand*\lineheight[1]{\fontsize{\fsize}{#1\fsize}\selectfont}%
  \ifx\svgwidth\undefined%
    \setlength{\unitlength}{615.30530955bp}%
    \ifx\svgscale\undefined%
      \relax%
    \else%
      \setlength{\unitlength}{\unitlength * \real{\svgscale}}%
    \fi%
  \else%
    \setlength{\unitlength}{\svgwidth}%
  \fi%
  \global\let\svgwidth\undefined%
  \global\let\svgscale\undefined%
  \makeatother%
  \begin{picture}(1,0.19686296)%
    \lineheight{1}%
    \setlength\tabcolsep{0pt}%
    \put(0.87800137,0.00369243){\color[rgb]{0,0,0}\makebox(0,0)[lt]{\lineheight{1.25}\smash{\begin{tabular}[t]{l}$g^{1,2}_1$\end{tabular}}}}%
    \put(0,0){\includegraphics[width=\unitlength,page=1]{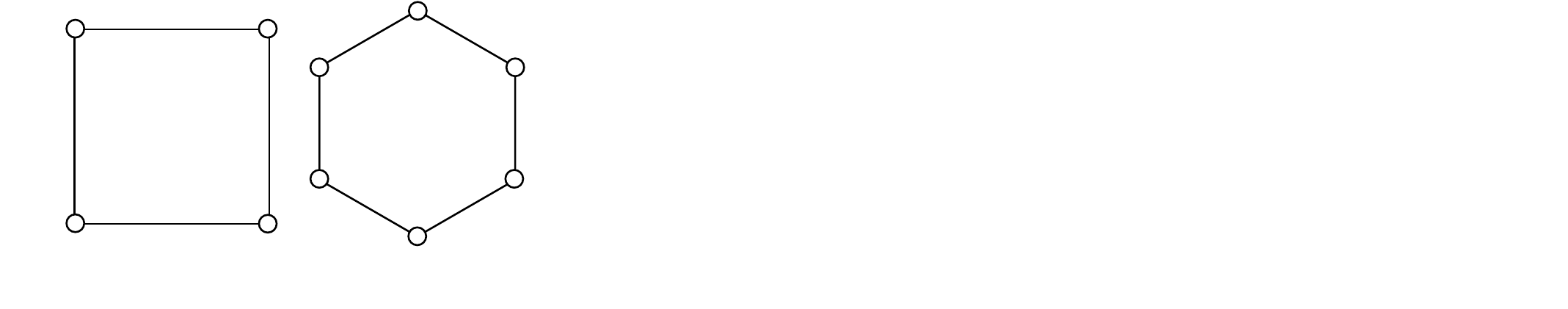}}%
    \put(0.09107582,0.00369243){\color[rgb]{0,0,0}\makebox(0,0)[lt]{\lineheight{1.25}\smash{\begin{tabular}[t]{l}$g^{2,0}_1$\end{tabular}}}}%
    \put(0.24846094,0.00369243){\color[rgb]{0,0,0}\makebox(0,0)[lt]{\lineheight{1.25}\smash{\begin{tabular}[t]{l}$g^{0,2}_1$\end{tabular}}}}%
    \put(0.40584606,0.00369243){\color[rgb]{0,0,0}\makebox(0,0)[lt]{\lineheight{1.25}\smash{\begin{tabular}[t]{l}$g^{3,0}_1$\end{tabular}}}}%
    \put(0.5632311,0.00369243){\color[rgb]{0,0,0}\makebox(0,0)[lt]{\lineheight{1.25}\smash{\begin{tabular}[t]{l}$g^{2,1}_1$\end{tabular}}}}%
    \put(0.72061625,0.00369243){\color[rgb]{0,0,0}\makebox(0,0)[lt]{\lineheight{1.25}\smash{\begin{tabular}[t]{l}$g^{0,3}_1$\end{tabular}}}}%
    \put(0,0){\includegraphics[width=\unitlength,page=2]{LM07.pdf}}%
    \put(-0.00135302,0.17608072){\color[rgb]{0,0,0}\makebox(0,0)[lt]{\lineheight{1.25}\smash{\begin{tabular}[t]{l}$\mbox{\ }$\end{tabular}}}}%
  \end{picture}%
\endgroup%

    \caption{Spherical graphs from $\mathcal{G}$ with two or three faces.}
    \label{r07}
\end{figure}

\section{Identification of knots and links}\label{s4}
\subsection{From triple-point projections to triple-point diagrams}

From each $sPD$ code form $Th_{n}$, we then generate $6^{n}$ possible $sPD$ codes by choosing the $\{T, M, B\}$ to be changed by its type among six possibilities of permuting three-letter set. The resulting set of all such diagrams we define as $TD_{n}$. 
\par
Then, we change in an $sPD$ code the crossing name, from $eX$ to $eY$ if we encounter the combination $B,M,T$ as we travel clockwise through the edges form the crossing. We rotate the position of numbers (edges) in each crossing, such that the first number corresponds to the incoming edge from the bottom (according to the orientation inherited from the monotonicity of edge labels), see Figure\;\ref{r20}.   

\begin{figure}[h!t]
    \centering
    \def\svgwidth{0.65\columnwidth}
    \import{./figs/}{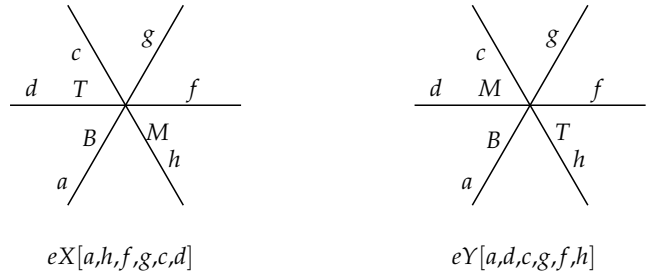}
    \caption{Code crossing names.}
    \label{r20}
\end{figure}

\begin{figure}[h!t]
    \centering
    \def\svgwidth{1.05\columnwidth}
    \import{./figs/}{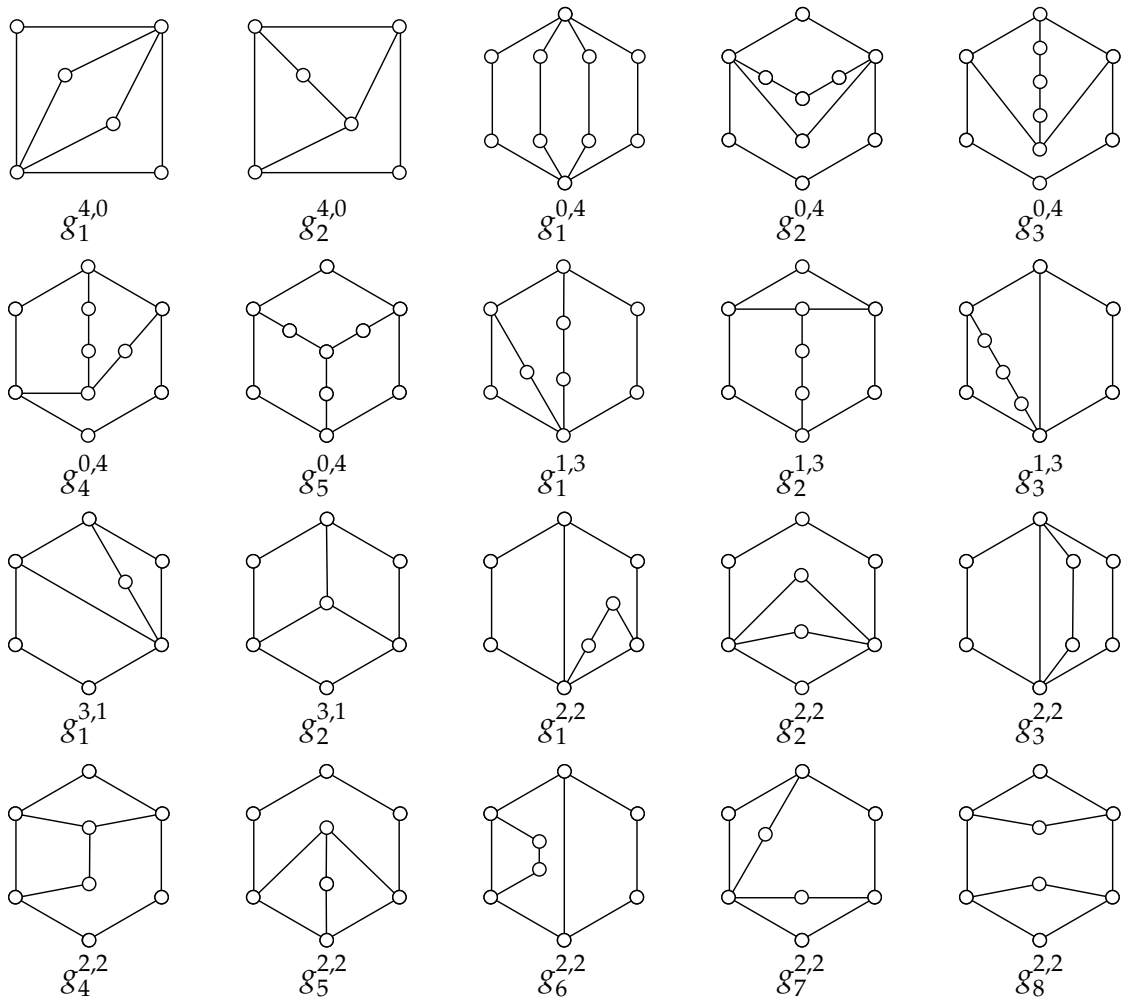}
    \caption{Spherical graphs from $\mathcal{G}$ with four faces.}
    \label{r08}
\end{figure}

By the analogous argument about $PD$ codes as is in the paper \cite{Mat15}, we can derive the following. 

\begin{proposition}
Given an $sPD$ code we can produce a well-defined link diagram on a sphere.
\end{proposition}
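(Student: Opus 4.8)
The plan is to imitate, step by step, the reconstruction of a link diagram from a classical $PD$ code, keeping track of the two features that are new here: the vertices are six-valent rather than four-valent, and the height data at a vertex is recorded by the pair (crossing name $eX$ or $eY$, cyclic position of the entries) rather than by a single over/under bit. First I would assemble a combinatorial surface. To each entry $eX[a_1,\dots,a_6]$ or $eY[a_1,\dots,a_6]$ of the code associate a disk with six disjoint arcs marked on its boundary, in counterclockwise cyclic order carrying the labels $a_1,\dots,a_6$; to each integer $j\in\{1,\dots,3n\}$ associate a band (a copy of $[0,1]\times[0,1]$). In a valid $sPD$ code every integer from $1$ to $3n$ occurs exactly twice among all the brackets --- this is part of being an $sPD$ code and follows from the construction of Section \ref{s3}, where the $n$-entry code is obtained from a $2n$-entry classical $PD$ code by deleting each contracting label and merging the two codes at its endpoints --- so each band has two ends, which I glue to the two boundary arcs (possibly on the same disk, in the loop case) that carry the matching label, respecting the boundary orientations. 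Capping the resulting boundary circles with disks yields a closed oriented surface $\Sigma$ carrying an embedded six-valent graph $G$ with $n$ vertices and $3n$ edges.

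The crucial step is to show that $\Sigma$ is a sphere. The cleanest route in the present setting is to recall that every $sPD$ code we work with is produced (Section \ref{s3}) from the classical $PD$ code of a genuine diagram on $S^2$ by the contraction operation of Figure \ref{r04}; contraction is a purely local modification and does not change the ambient surface, so, combined with the classical statement of \cite{Mat15} applied to the $2n$-entry code before contraction, planarity of $\Sigma$ is immediate. I would also give the intrinsic reason, exactly parallel to \cite{Mat15}: the faces of $G$ in $\Sigma$ are the orbits of the face permutation determined by the rotation system read off from the brackets, and the monotonicity of the labels along each transverse component together with the $eX$/$eY$ naming pins that rotation system down so that the face count is $2+2n$; then $\chi(\Sigma)=n-3n+(2+2n)=2$ and $\Sigma\cong S^2$.

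With $G$ embedded in $S^2$ it remains to upgrade $G$ to a diagram, i.e. to declare at each vertex which of the three through-strands is $T$, which $M$ and which $B$, and to orient everything. This is exactly what the conventions of Figure \ref{r20} encode: each component is oriented so that its labels increase; at a vertex the cyclic order of the entries has been normalised so that the first entry is the strand entering from the bottom, and whether the clockwise reading of $(B,M,T)$ agrees with the listed cyclic order or is reversed is precisely the distinction between the names $eX$ and $eY$. Hence every one of the three strands at a vertex gets a well-defined height, and the link diagram is completely determined by the code. For the word ``well-defined'' I would finally check that the only genuine choices in reading a code --- the starting label when enumerating a component, and the order in which the boundary circles of $\Sigma$ are capped --- affect the result only through data invariant under them, so the output diagram is unique up to spherical isotopy; this is the uniqueness half of the classical case and transfers verbatim.

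The main obstacle I expect is the genus-zero claim: an abstract list of $eX$/$eY$ brackets with the correct counting properties need not embed in $S^2$, so one really must either restrict to realizable codes (which is automatic for those produced in Section \ref{s3}) or carry out the combinatorial bookkeeping --- identical to that of \cite{Mat15} --- showing that the orientation and monotonicity conventions already force the face count and hence the genus. The remaining ingredients (the disk-and-band assembly, the $T,M,B$ assignment, and invariance under re-reading) are routine once the four-valent case is in hand.
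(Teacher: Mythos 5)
Your proposal is correct and matches the paper's intent: the paper gives no written proof, deferring entirely to ``the analogous argument about $PD$ codes as is in \cite{Mat15}'', and what you write out (the disk-and-band assembly, the genus-zero check via realizability of the codes produced in Section~\ref{s3} or via the face count, and the $T,M,B$ assignment from the $eX$/$eY$ naming and the normalized cyclic order of Figure~\ref{r20}) is precisely that analogy made explicit for six-valent vertices. Your closing caveat that an abstract bracket list need not be planar, so one must restrict to realizable codes or do the combinatorial bookkeeping, is the right point to flag and is handled correctly.
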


The numbers in the eighth column of Table\;\ref{table1} counts the number of all created from the set $Th_{n}$ marked diagrams in amounts obtained by the equality $|TD_{n}|=|Th_{n}|\cdot 6^{n}$.

\subsection{Distinguishing knots and links}

To identify types of knots and links, we use the two-variable Kauffman polynomial, which turns out to be strong enough for our purpose. In some cases of diagrams, we also use a program to reduce the upper bound on the double-crossing number to eliminate repetition on the polynomial with higher double-crossing knots. 
\par
The enumeration of the set of all prime knots $K$ with $c_3(K)=n$, denoted $K_n$, is presented (up to mirror image) in Table\;\ref{table1} (ninth column). The case for prime links ($L_n$) is presented in the last column of the table. Names and triple-crossing numbers of prime knots is presented in Table\;\ref{table3}, the case for prime links is presented in Table\;\ref{table6}.

\begin{center}
\renewcommand{\arraystretch}{1.3}
\begin{longtable}{@{}r||l@{}}
\caption{Triple-crossing number of prime knots.\label{table3}}\\
	$c_3$&knots\\
	\hline
		$1$&-\\
		\hline
		$2$&$3_1, 4_1$.\\
		\hline
		$3$&$5_2, 6_1$.\\
		\hline
		$4$&$5_1, 6_2, 6_3, 7_2, 7_4, 7_6, 7_7, 8_1, 8_3, 8_{12}, 8_{20}, 8_{21}, 9_{42}, 9_{44}, 9_{45}, 9_{46}, 9_{48},10_{132}, 10_{136}, $\\
		&$10_{137}, 10_{140}, K11n_{38}, K11n_{139}, K12n_{462}$.\\
		\hline
		$5$&$7_3, 7_5, 8_4, 8_6, 8_8, 8_{11}, 8_{13}, 8_{14}, 8_{15}, 9_2, 9_5, 9_8, 9_{12}, 9_{14}, 9_{15}, 9_{19}, 9_{21}, 9_{25}, 9_{35}$,$9_{37}$,\\
		&$9_{39}, 9_{41}, 9_{49}, 10_{1}, 10_3, 10_{13}$, $10_{35}$, $10_{58}$, $10_{129}$, $10_{130}$, $10_{131}$, $10_{133}$, $10_{135}$, $10_{144}$,\\ 
		&$10_{145}$, $10_{146}$, $10_{147}$, $10_{162}$, $10_{164}$, $10_{165}$, $K11n_{1}$, $K11n_{3}$, $K11n_{12}$, $K11n_{17}$,\\
		& $K11n_{18}$, $K11n_{20}$, $K11n_{28}$, $K11n_{29}$, $K11n_{49}$, $K11n_{62}$, $K11n_{63}$, $K11n_{68}$, $K11n_{79}$, \\
		&$K11n_{83}$, $K11n_{91}$, $K11n_{100}$, $K11n_{101}$, $K11n_{102}$, $K11n_{113}$, $K11n_{114}$, $K11n_{116}$,\\
		&$K11n_{117}$, $K11n_{123}$, $K11n_{132}$, $K11n_{140}$, $K11n_{141}$, $K11n_{142}$, $K11n_{162}$, $K11n_{170}$,\\
		&$K12n_{11}$, $K12n_{25}$, $K12n_{46}$, $K12n_{48}$, $K12n_{49}$, $K12n_{65}$, $K12n_{121}$, $K12n_{145}$,\\
		&$K12n_{200}$, $K12n_{274}$, $K12n_{282}$, $K12n_{310}$, $K12n_{311}$, $K12n_{323}$, $K12n_{358}$, $K12n_{359}$,\\
		&$K12n_{414}$, $K12n_{442}$, $K12n_{443}$, $K12n_{452}$, $K12n_{478}$, $K12n_{523}$, $K12n_{582}$, $K12n_{608}$,\\
		&$K12n_{838}$, $(\ldots)$\\
				\hline	
$6$&$K11a_{13}$, $K11a_{59}$, $K11a_{61}$, $K11a_{65}$, $K11a_{98}$, $K11a_{103}$, $K11a_{145}$, $K11a_{166}$, \\ 
&$K11a_{195}$, $K11a_{201}$, $K11a_{209}$, $K11a_{210}$, $K11a_{211}$, $K11a_{214}$, $K11a_{218}$, $K11a_{219}$,\\ 
&$K11a_{226}$, $K11a_{228}$, $K11a_{229}$, $K11a_{230}$, $K11a_{247}$, $K11a_{343}$, $K12a_{197}$, $K12a_{482}$,\\ 
&$K12a_{690}$, $K12a_{691}$, $K12a_{803}$, $K12a_{1124}$, $K12a_{1166}$, $K12a_{1202}$, $K12a_{1287}$, $(\ldots)$\\
	\hline
$7$&$K14a_{12741}$, $K14a_{16442}$, $K14a_{17385}$, $K14a_{17730}$, $K14a_{18053}$, $K14a_{19429}$, $(\ldots)$\\
\end{longtable}
\end{center}

\begin{center}
\renewcommand{\arraystretch}{1.3}
\begin{longtable}{@{}r||l@{}}
\caption{Triple-crossing number of prime links.\label{table6}}\\
	$c_3$&links\\
\hline
\endfirsthead
\multicolumn{2}{c}
{\tablename\ \thetable\ -- \textit{Continued from previous page}} \\
	$c_3$&links\\
\hline
\endhead
\hline \multicolumn{2}{r}{\textit{Continued on next page}} \\
\endfoot
\hline
\endlastfoot
		$1$&$2^2_1.$\\
		\hline
		$2$&$4^2_1,6^3_3$.\\
		\hline
		$3$&$5^2_1,6^2_1,6^2_3,6^3_1, 7^2_7, 7^2_8$, $8^2_{15}, 8^2_{16},8^3_7,8^3_8,9^2_{49}$.\\
		\hline
		$4$&$6^2_2$, $6^3_2$, $7^2_1$, $7^2_2$, $7^2_3$, $7^2_5$, $7^3_1$, $8^2_1$, $8^2_3$, $8^2_6$, $8^2_9$, $8^3_1$, $8^3_3$, $8^3_4$, $8^3_9$, $8^3_{10}$, $8^4_1$, $8^4_2$, $8^4_3$, $9^2_{45}$, $9^2_{46}$,\\
		&$9^2_{47}$, $9^2_{48}$, $9^2_{50}$, $9^2_{52}$, $9^2_{53}$, $9^2_{54}$, $9^3_{13}$, $9^3_{14}$, $9^3_{18}$, $9^3_{19}$, $L10n_7$, $L10n_8$, $L10n_{9}$, $L10n_{19}$,\\
		&$L10n_{31}$, $L10n_{45}$, $L10n_{48}$, $L10n_{49}$, $L10n_{68}$, $L10n_{69}$, $L10n_{70}$, $L10n_{71}$, $L10n_{77}$,\\
		&$L10n_{78}$, $L10n_{81}$, $L10n_{92}$, $L10n_{93}$, $L10n_{94}$, $L10n_{104}$, $L10n_{105}$, $L10n_{108}$,\\
		&$L10n_{109}$, $L11n_{140}$, $L11n_{141}$, $L11n_{204}$, $L11n_{376}$, $L11n_{378}$, $L11n_{419}$, $L11n_{420}$,\\
		&$L12n_{1804}$, $L12n_{1806}$, $L12n_{1807}$, $L12n_{1997}$, $L12n_{1998}$, $L12n_{2150}$, $L12n_{2151}$,\\
		&$L12n_{2159}$, $L12n_{2206}$, $L12n_{2209}$.\\
		\hline
		$5$&$7^2_4$, $7^2_6$, $8^2_2$, $8^2_4$, $8^2_5$, $8^2_7$, $8^2_{8}$, $8^2_{10}$, $8^2_{11}$, $8^2_{12}$, $8^2_{13}$, $8^2_{14}$, $8^3_{2}$, $8^3_{5}$, $8^3_6$, $9^2_{1}$, $9^2_{2}$, $9^2_{5}$, $9^2_{8}$, $9^2_{10}$, \\
		&$9^2_{11}$, $9^2_{12}$, $9^2_{16}$, $9^2_{17}$, $9^2_{20}$, $9^2_{23}$, $9^2_{24}$, $9^2_{25}$, $9^2_{26}$, $9^2_{28}$, $9^2_{30}$, $9^2_{40}$, $9^2_{41}$, $9^2_{57}$, $9^2_{58}$, $9^2_{60}$, $9^2_{61}$, \\
		&$9^3_{1}$, $9^3_{2}$, $9^3_{4}$, $9^3_{5}$, $9^3_{6}$, $9^3_{7}$, $9^3_{8}$, $9^3_{15}$, $9^3_{16}$, $9^3_{17}$, $9^4_{1}$, $L10a_{12}$, $L10a_{16}$, $L10a_{37}$, $L10a_{40}$,\\
		&$L10a_{48}$, $L10a_{67}$, $L10a_{75}$, $L10a_{87}$, $L10a_{98}$, $L10a_{102}$, $L10a_{118}$, $L10a_{123}$, $L10a_{126}$,\\
		&$L10a_{131}$, $L10a_{133}$, $L10a_{139}$, $L10a_{145}$, $L10a_{153}$ $L10a_{154}$, $L10a_{161}$, $L10a_{165}$,\\
		&$L10a_{167}$, $L10a_{168}$, $L10a_{172}$, $L10a_{174}$, $L10n_{28}$, $L10n_{72}$, $L10n_{73}$, $L10n_{80}$,\\ 
		&$L10n_{87}$, $L10n_{88}$, $L10n_{89}$, $L10n_{99}$, $L10n_{100}$, $L10n_{101}$, $L10n_{102}$, $L10n_{103}$,\\ 
		&$L10n_{112}$, $L10n_{113}$, $(\ldots)$ \\
		\hline
		$6$&$L12a_{73}$, $L12a_{77}$, $L12a_{100}$, $L12a_{118}$, $L12a_{240}$, $L12a_{269}$, $L12a_{289}$, $L12a_{410}$,\\
		&$L12a_{413}$, $L12a_{414}$, $L12a_{447}$, $L12a_{523}$, $L12a_{527}$, $L12a_{643}$, $L12a_{646}$, $L12a_{684}$,\\
		&$L12a_{734}$, $L12a_{736}$, $L12a_{813}$, $L12a_{838}$, $L12a_{896}$, $L12a_{941}$, $L12a_{982}$, $L12a_{1012}$, \\
		&$L12a_{1029}$, $L12a_{1064}$, $L12a_{1259}$, $L12a_{1275}$, $L12a_{1339}$, $L12a_{1372}$, $L12a_{1400}$, \\
		&$L12a_{1413}$, $L12a_{1417}$, $L12a_{1434}$, $L12a_{1437}$, $L12a_{1443}$, $L12a_{1484}$, $L12a_{1491}$, \\
		&$L12a_{1501}$, $L12a_{1507}$, $L12a_{1532}$, $L12a_{1535}$, $L12a_{1574}$, $L12a_{1617}$, $L12a_{1635}$, \\
		&$L12a_{1658}$, $L12a_{1668}$, $L12a_{1676}$, $L12a_{1686}$, $L12a_{1732}$, $L12a_{1750}$, $L12a_{1762}$, \\
		&$L12a_{1787}$, $L12a_{1807}$, $L12a_{1847}$, $L12a_{1879}$, $L12a_{1881}$, $L12a_{1898}$, $L12a_{1910}$, \\
		&$L12a_{1913}$, $L12a_{1919}$, $L12a_{1921}$, $L12a_{1925}$, $L12a_{1930}$, $L12a_{1931}$, $L12a_{1945}$, \\
		&$L12a_{1971}$, $L12a_{1972}$, $L12a_{1984}$, $L12a_{1985}$, $L12a_{1996}$, $L12a_{2000}$, $L12a_{2007}$, \\
		&$L12a_{2009}$, $L12a_{2011}$, $L12a_{2012}$, $L12a_{2017}$, $L12a_{2019}$, $L12a_{2020}$, $(\ldots)$ \\
\end{longtable}
\end{center}

In \cite{ACFIPVWZ14} it is stated that the following knots have triple-crossing number equal to five: $7_3$, $7_5$, $8_4$, $8_6$, $8_8$, $8_{11}$, $8_{13}$, $8_{14}$, $8_{15}$, $9_2$, $9_5$, $9_8$, $9_{12}$, $9_{14}$, $9_{15}$, $9_{19}$, $9_{21}$, $9_{25}$, $9_{35}$, $9_{37}$, $9_{39}$, $9_{41}$. We verify this result by an output of our mentioned computational method, and we calculated the triple-crossing number of further prime knots, with the result presented in Table\;\ref{table3}. For knots with the triple-crossing number equal to six or seven we also used \cite[Corollary\;3.5. and 3.6.]{Ada13}.
\par
Minimal diagrams of (unoriented) prime knots with triple-crossing number up to four are presented (in the form of its $sPD$ codes) in Table\;\ref{table2}, the case for prime links is presented in Table\;\ref{table5}. An $sPD$ codes for the minimal knots with triple-crossing number equal to five can be found in the arXiv source file of this article's preprint version.

\begin{remark}
A set of triple-crossing names, for two minimal diagrams of the same knot on the same projection, may be very different. An example of such diagrams,for the knot $9_{46}$, is presented in Figure\;\ref{r26} where the names in one case are $\{eY, eY, eY, eY\}$ and in the other case are $\{eY, eY, eY, eX\}$.
\end{remark}

\begin{figure}[h!t]
    \centering
    \def\svgwidth{0.94\columnwidth}
    \import{./figs/}{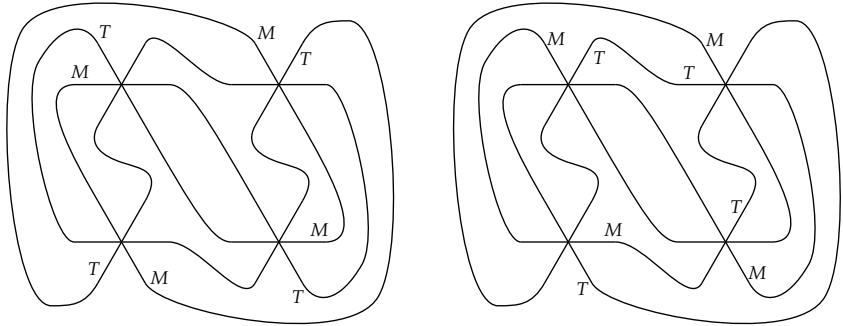}
    \caption{Different minimal projections of the same knot.}
    \label{r26}
\end{figure}

It is obvious that $c_2(K)\leq 3c_3(K)$ by splitting open each triple crossing. It is interesting to have a knot $K$ such that $c_2(K)=3c_3(K)$. In \cite{Nis19} it is shown that there is a prime knot $K$ such that $c_2(K)=3c_3(K)=12$. Counting components to the dual graphs to the graphs shown in Figure\;\ref{r09} led us to the following.

\begin{proposition}
There is no prime knot $K$ such that $c_2(K)=3c_3(K)=15$.
\end{proposition}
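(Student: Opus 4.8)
Suppose, toward a contradiction, that $K$ is a prime knot with $c_2(K)=3c_3(K)=15$, so $c_3(K)=5$ and $c_2(K)=15$. Fix a minimal triple-crossing diagram $D$ of $K$; it has exactly five triple-crossings. Since $K$ is prime, the underlying projection $p$ is prime and connected, and since $c_3(K)>1$ no triple-crossing of $p$ carries more than one loop (otherwise, as in the discussion preceding the construction of $Ta_n$, $D$ would be non-minimal or composite). My plan is to first force $p$ to be loopless, so that its spherical class lies in $Th^0_5$, and then to dispose of the finitely many remaining possibilities by counting transverse components.

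\emph{Step 1: $p$ is loopless.} Suppose some triple-crossing $v$ of $D$ carries a loop. If that loop meets the middle strand at $v$, then, exactly as in the proof of Proposition \ref{prop3}, a $T_1$-type move deletes $v$, yielding a triple-crossing diagram of $K$ with four triple-crossings and contradicting $c_3(K)=5$. Hence every loop of $p$ sits on the top and bottom strands of its triple-crossing, and by Lemma \ref{lem01} each such triple-crossing-with-a-loop is locally a clasp, so it can be replaced by two classical crossings. Carrying out this replacement at all $\ell$ loop-carrying triple-crossings of $D$ and then splitting each of the remaining $5-\ell$ triple-crossings into its three classical crossings produces a classical diagram of $K$ with $2\ell+3(5-\ell)=15-\ell$ crossings; thus $c_2(K)\le 15-\ell$. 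Since $c_2(K)=15$ we conclude $\ell=0$, i.e.\ $p$ has no loops.

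\emph{Step 2: enumeration and component count.} A loopless prime connected triple-crossing projection with five triple-points has spherical class in $Th^0_5$, which by the dual-graph construction is exactly the set of duals of the twelve spherical graphs in $Gr_5$ all of whose faces are hexagons, namely the graphs displayed in Figure \ref{r09}; so $p$ is, up to spherical isotopy and mirror image, one of these twelve. Now a triple-crossing projection is the projection of a \emph{knot} diagram if and only if it is a single transverse circle, i.e.\ the links it carries have one component. Tracing the straight-ahead paths through the triple-points of these twelve projections (equivalently, counting the transverse components of the duals of the graphs in Figure \ref{r09}) shows that every one of them has at least two transverse components, hence is never the projection of a knot diagram. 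This contradicts the fact that $p$ is the projection of $D$, and the proposition follows.

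The part that needs genuine care is Step 1: one has to verify both that a loop meeting the middle strand can be removed (so that minimality of $c_3$ is violated) and that a loop on the two outer strands costs only two classical crossings rather than three, via Lemma \ref{lem01}; the inequality $c_2(K)\le 3c_3(K)-(\text{number of loops})$ is the quantitative heart of the argument. Step 2 is then a finite verification over the twelve projections that are the duals of the graphs already tabulated in Figure \ref{r09}.
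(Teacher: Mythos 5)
Your proof is correct and follows the same route as the paper, which simply says that counting transverse components of the duals of the all-hexagon five-face graphs in Figure~\ref{r09} (i.e.\ the twelve loopless projections in $Th^0_5$) yields the result. Your Step~1, establishing $c_2(K)\le 3c_3(K)-\ell$ via the $T_1$-move and Lemma~\ref{lem01} so that a knot with $c_2=3c_3$ must have a loopless minimal projection, is a helpful explicit justification of the reduction to $Th^0_5$ that the paper leaves implicit.
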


\begin{question}
Is there a prime knot $K$ with an odd crossing number $c_2(K)=3c_3(K)$?
\end{question}

\begin{footnotesize}
\renewcommand{\arraystretch}{1.25}

\begin{center}
\begin{longtable}{r||l}
\caption{Minimal diagrams of prime knots.\label{table2}}\\
knot & $sPD$ code of a minimal knot diagram \\
\hline
\endfirsthead
\multicolumn{2}{c}
{\tablename\ \thetable\ -- \textit{Continued from previous page}} \\
knot & $sPD$ code of a minimal knot diagram \\
\hline
\endhead
\hline \multicolumn{2}{r}{\textit{Continued on next page}} \\
\endfoot
\hline
\endlastfoot
$3_1$&sPD[eY[4, 2, 5, 5, 1, 6], eY[3, 1, 2, 4, 6, 3]]\\
$4_1$&sPD[eX[5, 2, 4, 6, 1, 5], eY[3, 1, 2, 4, 6, 3]]\\
$5_2$&sPD[eY[6, 2, 7, 7, 1, 8], eY[3, 1, 2, 4, 9, 3], 
  eY[5, 9, 4, 6, 8, 5]]\\
$6_1$&sPD[eX[7, 2, 6, 8, 1, 7], eY[3, 1, 2, 4, 9, 3], 
  eY[5, 9, 4, 6, 8, 5]]\\
$5_1$&sPD[eY[8, 2, 7, 9, 1, 8], eY[5, 3, 4, 6, 2, 5], 
  eY[11, 4, 12, 12, 3, 1], eY[9, 7, 10, 10, 6, 11]]\\
$6_2$&sPD[eX[7, 2, 8, 8, 1, 9], eY[5, 3, 4, 6, 2, 5], 
  eY[11, 4, 12, 12, 3, 1], eY[9, 7, 10, 10, 6, 11]]\\
$6_3$&sPD[eX[7, 2, 8, 8, 1, 9], eX[10, 7, 9, 11, 6, 10], 
  eY[5, 3, 4, 6, 2, 5], eY[11, 4, 12, 12, 3, 1]]\\
$7_2$&sPD[eY[8, 2, 9, 9, 1, 10], eY[3, 1, 2, 4, 12, 3], 
  eY[5, 12, 4, 6, 11, 5], eY[7, 11, 6, 8, 10, 7]]\\
$7_4$&sPD[eX[7, 2, 8, 8, 1, 9], eX[5, 3, 6, 6, 2, 7], 
  eX[12, 4, 11, 1, 3, 12], eX[10, 5, 9, 11, 4, 10]]\\
$7_6$&sPD[eX[4, 3, 5, 5, 2, 6], eY[8, 2, 7, 9, 1, 8], 
  eY[11, 4, 12, 12, 3, 1], eY[9, 7, 10, 10, 6, 11]]\\
$7_7$&sPD[eX[7, 2, 8, 8, 1, 9], eX[12, 4, 11, 1, 3, 12], 
  eY[5, 3, 4, 6, 2, 5], eY[9, 7, 10, 10, 6, 11]]\\
$8_1$&sPD[eX[9, 2, 8, 10, 1, 9], eY[3, 1, 2, 4, 12, 3], 
  eY[5, 12, 4, 6, 11, 5], eY[7, 11, 6, 8, 10, 7]]\\
$8_3$&sPD[eX[7, 2, 8, 8, 1, 9], eX[5, 3, 6, 6, 2, 7], 
  eY[11, 4, 12, 12, 3, 1], eY[9, 5, 10, 10, 4, 11]]\\
$8_{12}$&sPD[eX[7, 2, 8, 8, 1, 9], eX[4, 3, 5, 5, 2, 6], 
  eY[11, 4, 12, 12, 3, 1], eY[9, 7, 10, 10, 6, 11]]\\
$8_{20}$&sPD[eX[9, 2, 10, 10, 1, 11], eY[8, 6, 11, 9, 5, 12], 
  eY[12, 5, 2, 1, 4, 3], eY[7, 4, 6, 8, 3, 7]]\\
$8_{21}$&sPD[eY[4, 3, 12, 5, 2, 1], eY[11, 9, 5, 12, 8, 6], 
  eY[10, 2, 9, 11, 1, 10], eY[7, 4, 6, 8, 3, 7]]\\
$9_{42}$&sPD[eX[9, 2, 10, 10, 1, 11], eX[6, 4, 7, 7, 3, 8], 
  eY[8, 6, 11, 9, 5, 12], eY[12, 5, 2, 1, 4, 3]]\\
$9_{44}$&sPD[eX[9, 2, 10, 10, 1, 11], eY[4, 3, 12, 5, 2, 1], 
  eY[11, 9, 5, 12, 8, 6], eY[7, 4, 6, 8, 3, 7]]\\
$9_{45}$&sPD[eY[8, 6, 11, 9, 5, 12], eY[12, 5, 2, 1, 4, 3], 
  eY[10, 2, 9, 11, 1, 10], eY[7, 4, 6, 8, 3, 7]]\\	
$9_{46}$&sPD[eX[7, 2, 6, 8, 1, 7], eY[12, 3, 9, 1, 2, 10], 
  eY[4, 9, 3, 5, 8, 4], eY[10, 6, 11, 11, 5, 12]]\\
$9_{48}$&sPD[eX[7, 2, 6, 8, 1, 7], eX[3, 9, 4, 4, 8, 5], 
  eX[11, 6, 10, 12, 5, 11], eY[9, 1, 2, 10, 12, 3]]\\
$10_{132}$&sPD[eY[6, 12, 3, 7, 11, 4], eY[2, 1, 5, 3, 12, 6], 
  eY[9, 2, 8, 10, 1, 9], eY[4, 11, 7, 5, 10, 8]]\\
$10_{136}$&sPD[eX[9, 2, 10, 10, 1, 11], eX[6, 4, 7, 7, 3, 8], 
  eY[4, 3, 12, 5, 2, 1], eY[11, 9, 5, 12, 8, 6]]\\
$10_{137}$&sPD[eX[9, 2, 10, 10, 1, 11], eY[7, 4, 6, 8, 3, 7], 
  eY[5, 12, 8, 6, 11, 9], eY[2, 1, 4, 3, 12, 5]]\\
$10_{140}$&sPD[eY[4, 10, 1, 5, 9, 2], eY[2, 12, 6, 3, 11, 7], 
  eY[12, 9, 5, 1, 8, 6], eY[7, 11, 3, 8, 10, 4]]\\
$K11n_{38}$&sPD[eX[8, 2, 9, 9, 1, 10], eY[6, 12, 3, 7, 11, 4], 
  eY[2, 1, 5, 3, 12, 6], eY[4, 11, 7, 5, 10, 8]]\\
$K11n_{139}$&sPD[eX[4, 2, 9, 5, 1, 10], eX[10, 8, 3, 11, 7, 4], 
  eX[2, 7, 11, 3, 6, 12], eX[8, 1, 5, 9, 12, 6]]\\
$K12n_{462}$&sPD[eX[4, 2, 9, 5, 1, 10], eY[10, 4, 7, 11, 3, 8], 
  eY[6, 3, 11, 7, 2, 12], eX[8, 1, 5, 9, 12, 6]]\\
	\hline
\end{longtable}
\end{center}
\end{footnotesize}


\begin{footnotesize}
\renewcommand{\arraystretch}{1.25}

\begin{center}
\begin{longtable}{r||l}
\caption{Minimal diagrams of prime links.\label{table5}}\\
link & $sPD$ code of a minimal link diagram \\
\hline
\endfirsthead
\multicolumn{2}{c}
{\tablename\ \thetable\ -- \textit{Continued from previous page}} \\
link & $sPD$ code of a minimal link diagram \\
\hline
\endhead
\hline \multicolumn{2}{r}{\textit{Continued on next page}} \\
\endfoot
\hline
\endlastfoot

$2^2_1$&sPD[eX[1, 1, 3, 2, 2, 3]]\\
$4^2_1$&sPD[eY[4, 3, 5, 5, 2, 6], eY[3, 4, 1, 1, 6, 2]]\\
$6^3_3$&sPD[eX[1, 4, 6, 2, 3, 5], eX[5, 3, 2, 6, 4, 1]]\\
$5^2_1$&sPD[eX[8, 1, 7, 9, 2, 8], eY[3, 2, 4, 4, 1, 5], 
  eY[6, 3, 5, 7, 9, 6]]\\
$6^2_1$&sPD[eX[7, 1, 8, 8, 2, 9], eX[5, 3, 6, 6, 1, 7], 
  eX[9, 2, 4, 4, 3, 5]]\\
$6^2_3$&sPD[eX[5, 3, 6, 6, 9, 7], eY[7, 1, 8, 8, 2, 9], 
  eY[3, 2, 4, 4, 1, 5]]\\
$6^3_1$&sPD[eY[7, 1, 8, 8, 2, 9], eY[3, 2, 4, 4, 1, 5], 
  eY[5, 7, 6, 6, 9, 3]]\\
$7^2_7$&sPD[eX[3, 8, 1, 4, 9, 2], eX[7, 9, 4, 1, 8, 5], 
  eY[6, 3, 5, 7, 2, 6]]\\
$7^2_8$&sPD[eY[8, 3, 2, 9, 4, 1], eY[6, 3, 5, 7, 2, 6], 
  eY[7, 5, 8, 1, 4, 9]]\\
$8^2_{15}$&sPD[eX[5, 3, 6, 6, 2, 7], eY[8, 3, 2, 9, 4, 1], 
  eY[7, 5, 8, 1, 4, 9]]\\
$8^2_{16}$&sPD[eX[5, 3, 6, 6, 2, 7], eX[3, 8, 1, 4, 9, 2], 
  eX[7, 9, 4, 1, 8, 5]]\\
$8^3_7$&sPD[eX[6, 2, 4, 7, 3, 5], eX[8, 1, 9, 9, 2, 6], 
  eX[5, 3, 7, 4, 1, 8]]\\
$8^3_8$&sPD[eX[6, 2, 9, 7, 3, 6], eY[8, 4, 2, 9, 5, 1], 
  eY[3, 4, 8, 1, 5, 7]]\\
$9^2_{49}$&sPD[eX[2, 7, 5, 3, 8, 6], eX[6, 8, 3, 1, 9, 4], 
  eX[4, 9, 1, 5, 7, 2]]\\
$6^2_2$&sPD[eX[6, 1, 7, 7, 2, 8], eX[4, 3, 5, 5, 1, 6], 
  eX[10, 2, 11, 11, 3, 12], eX[9, 4, 8, 10, 12, 9]]\\
$6^3_2$&sPD[eX[8, 2, 7, 9, 1, 8], eX[5, 11, 6, 6, 12, 7], 
  eY[3, 1, 4, 4, 2, 5], eY[10, 12, 9, 3, 11, 10]]\\
$7^2_1$&sPD[eX[10, 2, 11, 11, 3, 12], eX[9, 4, 8, 10, 12, 9], 
  eY[7, 1, 6, 8, 2, 7], eY[5, 3, 4, 6, 1, 5]]\\
$7^2_2$&sPD[eX[5, 3, 4, 6, 2, 5], eY[8, 2, 9, 9, 1, 10], 
  eY[10, 1, 11, 11, 3, 12], eY[7, 4, 6, 8, 12, 7]]\\
$7^2_3$&sPD[eX[6, 1, 5, 7, 2, 6], eX[7, 5, 8, 8, 4, 9], 
  eX[12, 10, 3, 3, 9, 4], eY[10, 2, 11, 11, 1, 12]]\\
$7^2_5$&sPD[eY[4, 8, 3, 5, 7, 4], eY[8, 7, 1, 1, 6, 2], 
  eY[9, 6, 10, 10, 5, 11], eY[11, 3, 12, 12, 2, 9]]\\
$7^3_1$&sPD[eX[1, 5, 2, 2, 4, 3], eX[10, 3, 11, 11, 4, 12], 
  eY[6, 1, 7, 7, 5, 8], eY[8, 12, 9, 9, 10, 6]]\\
$8^2_1$&sPD[eX[7, 1, 8, 8, 2, 9], eX[5, 4, 6, 6, 1, 7], 
  eX[11, 3, 12, 12, 4, 5], eX[9, 2, 10, 10, 3, 11]]\\
$8^2_3$&sPD[eX[6, 1, 7, 7, 2, 8], eX[4, 3, 5, 5, 1, 6], 
  eX[10, 2, 11, 11, 3, 12], eY[8, 4, 9, 9, 12, 10]]\\
$8^2_6$&sPD[eX[7, 5, 8, 8, 4, 9], eX[12, 10, 3, 3, 9, 4], 
  eY[5, 1, 6, 6, 2, 7], eY[10, 2, 11, 11, 1, 12]]\\
$8^2_9$&sPD[eX[3, 8, 4, 4, 7, 5], eY[8, 7, 1, 1, 6, 2], 
  eY[9, 6, 10, 10, 5, 11], eY[11, 3, 12, 12, 2, 9]]\\
$8^3_1$&sPD[eY[6, 2, 5, 7, 1, 6], eY[10, 2, 11, 11, 1, 12], 
  eY[4, 9, 3, 5, 10, 4], eY[8, 12, 7, 3, 9, 8]]\\	
$8^3_3$&sPD[eX[4, 2, 3, 5, 1, 4], eY[11, 3, 10, 12, 2, 11], 
  eY[8, 1, 7, 9, 5, 8], eY[6, 10, 9, 7, 12, 6]]\\
$8^3_4$&sPD[eX[2, 6, 3, 3, 7, 4], eY[5, 11, 2, 6, 12, 1], 
  eX[10, 1, 5, 11, 4, 8], eY[12, 7, 9, 9, 8, 10]]\\
$8^3_9$&sPD[eX[4, 1, 5, 5, 2, 6], eX[8, 2, 9, 9, 1, 10], 
  eX[7, 12, 6, 8, 11, 7], eY[10, 11, 3, 3, 12, 4]]\\
$8^3_{10}$&sPD[eX[7, 4, 6, 8, 5, 7], eY[3, 12, 5, 4, 11, 1], 
  eY[1, 11, 6, 2, 10, 9], eX[12, 8, 2, 10, 9, 3]]\\
$8^4_1$&sPD[eY[8, 1, 9, 9, 2, 10], eY[3, 2, 4, 4, 1, 5], 
  eY[5, 12, 6, 6, 11, 3], eY[10, 11, 7, 7, 12, 8]]\\
$8^4_2$&sPD[eX[7, 2, 8, 8, 1, 9], eX[4, 2, 3, 5, 1, 4], 
  eX[6, 12, 5, 3, 11, 6], eY[10, 12, 9, 7, 11, 10]]\\
$8^4_3$&sPD[eX[7, 2, 8, 8, 1, 9], eX[4, 2, 3, 5, 1, 4], 
  eY[5, 12, 6, 6, 11, 3], eY[10, 12, 9, 7, 11, 10]]\\
$9^2_{45}$&sPD[eX[2, 12, 10, 3, 11, 1], eX[7, 11, 4, 8, 12, 5], 
  eY[6, 2, 5, 7, 1, 6], eY[9, 4, 8, 10, 3, 9]]\\
$9^2_{46}$&sPD[eY[11, 4, 2, 12, 5, 1], eY[7, 4, 6, 8, 3, 7], 
  eY[9, 3, 8, 10, 2, 9], eY[10, 6, 11, 1, 5, 12]]\\
$9^2_{47}$&sPD[eY[2, 1, 11, 3, 10, 12], eY[6, 2, 5, 7, 1, 6], 
  eY[9, 4, 8, 10, 3, 9], eY[12, 8, 4, 11, 7, 5]]\\
$9^2_{48}$&sPD[eX[3, 11, 9, 4, 12, 10], eX[10, 12, 2, 1, 11, 3], 
  eY[5, 2, 6, 6, 1, 7], eY[8, 5, 7, 9, 4, 8]]\\
$9^2_{50}$&sPD[eX[5, 2, 4, 6, 1, 5], eX[2, 1, 10, 3, 8, 9], 
  eX[12, 7, 11, 9, 8, 12], eX[10, 6, 4, 11, 7, 3]]\\
$9^2_{52}$&sPD[eX[12, 7, 11, 9, 8, 12], eY[8, 3, 10, 1, 2, 9], 
  eY[4, 2, 5, 5, 1, 6], eY[10, 3, 7, 11, 4, 6]]\\
$9^2_{53}$&sPD[eX[10, 6, 9, 11, 7, 10], eX[4, 11, 3, 5, 12, 4], 
  eX[2, 8, 1, 3, 9, 2], eX[7, 12, 5, 1, 8, 6]]\\
$9^2_{54}$&sPD[eX[10, 6, 9, 11, 7, 10], eX[4, 11, 3, 5, 12, 4], 
  eY[1, 8, 2, 2, 9, 3], eY[12, 7, 6, 8, 1, 5]]\\
$9^3_{13}$&sPD[eX[5, 1, 7, 6, 2, 8], eX[12, 4, 6, 7, 1, 5], 
  eY[10, 9, 11, 11, 8, 12], eY[2, 9, 3, 3, 10, 4]]\\
$9^3_{14}$&sPD[eX[3, 10, 2, 4, 9, 3], eY[5, 11, 2, 6, 12, 1], 
  eY[8, 11, 7, 9, 10, 8], eY[4, 7, 5, 1, 12, 6]]\\	
$9^3_{18}$&sPD[eX[7, 2, 8, 8, 3, 9], eY[1, 4, 12, 2, 3, 11], 
  eY[10, 5, 9, 7, 6, 10], eY[11, 5, 6, 12, 4, 1]]\\
$9^3_{19}$&sPD[eY[3, 12, 5, 4, 11, 1], eY[1, 11, 6, 2, 10, 9], 
  eX[12, 8, 2, 10, 9, 3], eY[6, 4, 7, 7, 5, 8]]\\
$L10n_7$&sPD[eX[6, 4, 7, 7, 3, 8], eX[8, 3, 9, 9, 2, 10], 
  eX[4, 11, 1, 5, 12, 2], eX[10, 12, 5, 1, 11, 6]]\\
$L10n_8$&sPD[eX[6, 4, 7, 7, 3, 8], eX[8, 3, 9, 9, 2, 10], 
  eY[11, 4, 2, 12, 5, 1], eY[10, 6, 11, 1, 5, 12]]\\
$L10n_{9}$&sPD[eX[10, 3, 11, 1, 2, 12], eX[8, 4, 9, 9, 3, 10], 
  eX[12, 5, 7, 11, 4, 8], eY[6, 2, 5, 7, 1, 6]]\\
$L10n_{19}$&sPD[eX[6, 2, 5, 7, 1, 6], eX[3, 11, 9, 4, 12, 10], 
  eX[10, 12, 2, 1, 11, 3], eY[8, 5, 7, 9, 4, 8]]\\
$L10n_{31}$&sPD[eX[5, 2, 6, 6, 1, 7], eX[2, 12, 10, 3, 11, 1], 
  eX[7, 11, 4, 8, 12, 5], eY[9, 4, 8, 10, 3, 9]]\\
$L10n_{45}$&sPD[eX[2, 1, 10, 3, 8, 9], eX[12, 7, 11, 9, 8, 12], 
  eX[10, 6, 4, 11, 7, 3], eY[4, 2, 5, 5, 1, 6]]\\
$L10n_{48}$&sPD[eX[5, 2, 4, 6, 1, 5], eX[12, 7, 11, 9, 8, 12], 
  eY[8, 3, 10, 1, 2, 9], eY[10, 3, 7, 11, 4, 6]]\\
$L10n_{49}$&sPD[eX[5, 2, 4, 6, 1, 5], eX[12, 7, 11, 9, 8, 12], 
  eX[8, 9, 2, 1, 10, 3], eX[3, 10, 6, 4, 11, 7]]\\
$L10n_{68}$&sPD[eX[5, 1, 7, 6, 2, 8], eX[3, 9, 2, 4, 10, 3], 
  eX[12, 4, 6, 7, 1, 5], eY[10, 9, 11, 11, 8, 12]]\\
$L10n_{69}$&sPD[eX[3, 9, 2, 4, 10, 3], eY[1, 5, 8, 2, 6, 7], 
  eY[12, 5, 1, 7, 6, 4], eY[10, 9, 11, 11, 8, 12]]\\
$L10n_{70}$&sPD[eX[7, 2, 8, 8, 3, 9], eX[9, 5, 10, 10, 6, 7], 
  eY[1, 4, 12, 2, 3, 11], eY[11, 5, 6, 12, 4, 1]]\\
$L10n_{71}$&sPD[eX[7, 2, 8, 8, 1, 9], eX[11, 4, 10, 12, 3, 11], 
  eX[2, 5, 4, 3, 6, 1], eX[9, 6, 12, 10, 5, 7]]\\
$L10n_{77}$&sPD[eX[6, 1, 8, 7, 2, 9], eX[12, 5, 11, 8, 1, 12], 
  eX[3, 9, 2, 4, 10, 3], eX[10, 4, 7, 11, 5, 6]]\\
$L10n_{78}$&sPD[eX[7, 2, 12, 8, 3, 7], eX[9, 3, 8, 10, 4, 9], 
  eY[11, 5, 2, 12, 6, 1], eY[4, 5, 11, 1, 6, 10]]\\
$L10n_{81}$&sPD[eX[12, 5, 11, 8, 1, 12], eX[3, 9, 2, 4, 10, 3], 
  eY[1, 6, 9, 2, 7, 8], eY[10, 6, 5, 11, 7, 4]]\\
$L10n_{92}$&sPD[eX[2, 6, 3, 3, 7, 4], eY[5, 11, 2, 6, 12, 1], 
  eY[12, 7, 9, 9, 8, 10], eY[4, 11, 5, 1, 10, 8]]\\	
$L10n_{93}$&sPD[eX[9, 7, 12, 10, 8, 9], eX[2, 6, 3, 3, 7, 4], 
  eX[11, 5, 1, 12, 6, 2], eX[4, 8, 10, 1, 5, 11]]\\
$L10n_{94}$&sPD[eX[7, 2, 6, 8, 1, 7], eY[2, 11, 5, 3, 12, 1], 
  eX[12, 8, 4, 10, 9, 3], eX[4, 6, 11, 5, 9, 10]]\\
$L10n_{104}$&sPD[eX[5, 1, 6, 6, 2, 7], eY[2, 10, 9, 3, 11, 8], 
  eY[9, 12, 1, 8, 11, 3], eY[4, 10, 7, 5, 12, 4]]\\
$L10n_{105}$&sPD[eX[10, 2, 8, 11, 1, 9], eX[9, 1, 11, 8, 3, 12], 
  eY[6, 3, 5, 7, 2, 6], eY[4, 10, 7, 5, 12, 4]]\\
$L10n_{108}$&sPD[eX[10, 2, 8, 11, 3, 9], eX[5, 2, 4, 6, 1, 5], 
  eX[7, 12, 6, 4, 10, 7], eX[9, 3, 11, 8, 1, 12]]\\
$L10n_{109}$&sPD[eX[5, 2, 4, 6, 3, 5], eX[7, 10, 6, 4, 12, 7], 
  eY[11, 8, 2, 12, 9, 1], eY[3, 8, 11, 1, 9, 10]]\\
$L11n_{140}$&sPD[eY[9, 6, 2, 10, 5, 1], eX[7, 2, 10, 8, 3, 11], 
  eY[11, 7, 12, 12, 6, 4], eY[3, 4, 9, 1, 5, 8]]\\	
$L11n_{141}$&sPD[eX[9, 1, 5, 10, 2, 6], eX[7, 2, 10, 8, 3, 11], 
  eX[4, 3, 8, 5, 1, 9], eY[11, 7, 12, 12, 6, 4]]\\
$L11n_{204}$&sPD[eX[4, 11, 5, 5, 12, 6], eX[2, 9, 7, 3, 10, 8], 
  eX[8, 10, 3, 1, 11, 4], eX[6, 12, 1, 7, 9, 2]]\\
$L11n_{376}$&sPD[eY[11, 4, 2, 12, 5, 1], eX[3, 9, 6, 4, 10, 7], 
  eX[7, 10, 2, 8, 9, 3], eY[8, 6, 11, 1, 5, 12]]\\
$L11n_{378}$&sPD[eY[11, 4, 2, 12, 5, 1], eY[3, 7, 10, 4, 6, 9], 
  eY[8, 6, 11, 1, 5, 12], eY[9, 8, 2, 10, 7, 3]]\\	
$L11n_{419}$&sPD[eY[2, 11, 5, 3, 12, 1], eX[12, 8, 4, 10, 9, 3], 
  eX[4, 6, 11, 5, 9, 10], eY[6, 2, 7, 7, 1, 8]]\\
$L11n_{420}$&sPD[eX[7, 2, 6, 8, 1, 7], eX[9, 10, 4, 6, 11, 5], 
  eX[5, 11, 2, 1, 12, 3], eX[3, 12, 8, 4, 10, 9]]\\
$L12n_{1804}$&sPD[eY[1, 11, 4, 2, 12, 5], eY[5, 12, 8, 6, 11, 1], 
  eX[3, 9, 6, 4, 10, 7], eX[7, 10, 2, 8, 9, 3]]\\
$L12n_{1806}$&sPD[eX[11, 1, 5, 12, 2, 4], eX[3, 9, 6, 4, 10, 7], 
  eX[5, 1, 11, 6, 8, 12], eX[7, 10, 2, 8, 9, 3]]\\
$L12n_{1807}$&sPD[eX[11, 1, 5, 12, 2, 4], eY[3, 7, 10, 4, 6, 9], 
  eX[5, 1, 11, 6, 8, 12], eY[9, 8, 2, 10, 7, 3]]\\
$L12n_{1997}$&sPD[eY[11, 2, 6, 12, 1, 5], eY[9, 4, 8, 10, 3, 7], 
  eY[8, 4, 11, 5, 1, 10], eY[6, 2, 9, 7, 3, 12]]\\
$L12n_{1998}$&sPD[eY[1, 5, 11, 2, 6, 12], eY[3, 7, 9, 4, 8, 10], 
  eY[12, 6, 2, 9, 7, 3], eY[10, 8, 4, 11, 5, 1]]\\
$L12n_{2150}$&sPD[eX[9, 2, 7, 10, 1, 8], eX[8, 1, 10, 7, 4, 11], 
  eX[11, 4, 6, 12, 3, 5], eX[6, 2, 9, 5, 3, 12]]\\
$L12n_{2151}$&sPD[eX[11, 1, 8, 12, 2, 7], eX[8, 1, 11, 7, 4, 10], 
  eY[9, 6, 4, 10, 5, 3], eY[2, 6, 9, 3, 5, 12]]\\
$L12n_{2159}$&sPD[eY[11, 7, 2, 12, 8, 1], eY[4, 7, 11, 1, 8, 10], 
  eY[9, 6, 4, 10, 5, 3], eY[2, 6, 9, 3, 5, 12]]\\
$L12n_{2206}$&sPD[eY[5, 7, 2, 6, 8, 1], eY[9, 4, 11, 7, 5, 12], 
  eY[12, 1, 8, 10, 3, 9], eY[2, 11, 4, 3, 10, 6]]\\
$L12n_{2209}$&sPD[eX[4, 2, 9, 5, 1, 8], eX[12, 8, 1, 10, 7, 3], 
  eY[9, 5, 10, 7, 6, 11], eY[2, 11, 6, 3, 12, 4]]\\
	\hline
\end{longtable}
\end{center}
\end{footnotesize}

\begin{figure}[h!t]
    \centering
		    \caption{Spherical graphs with five faces being hexagons.}
    \def\svgwidth{0.9\columnwidth}
    \import{./figs/}{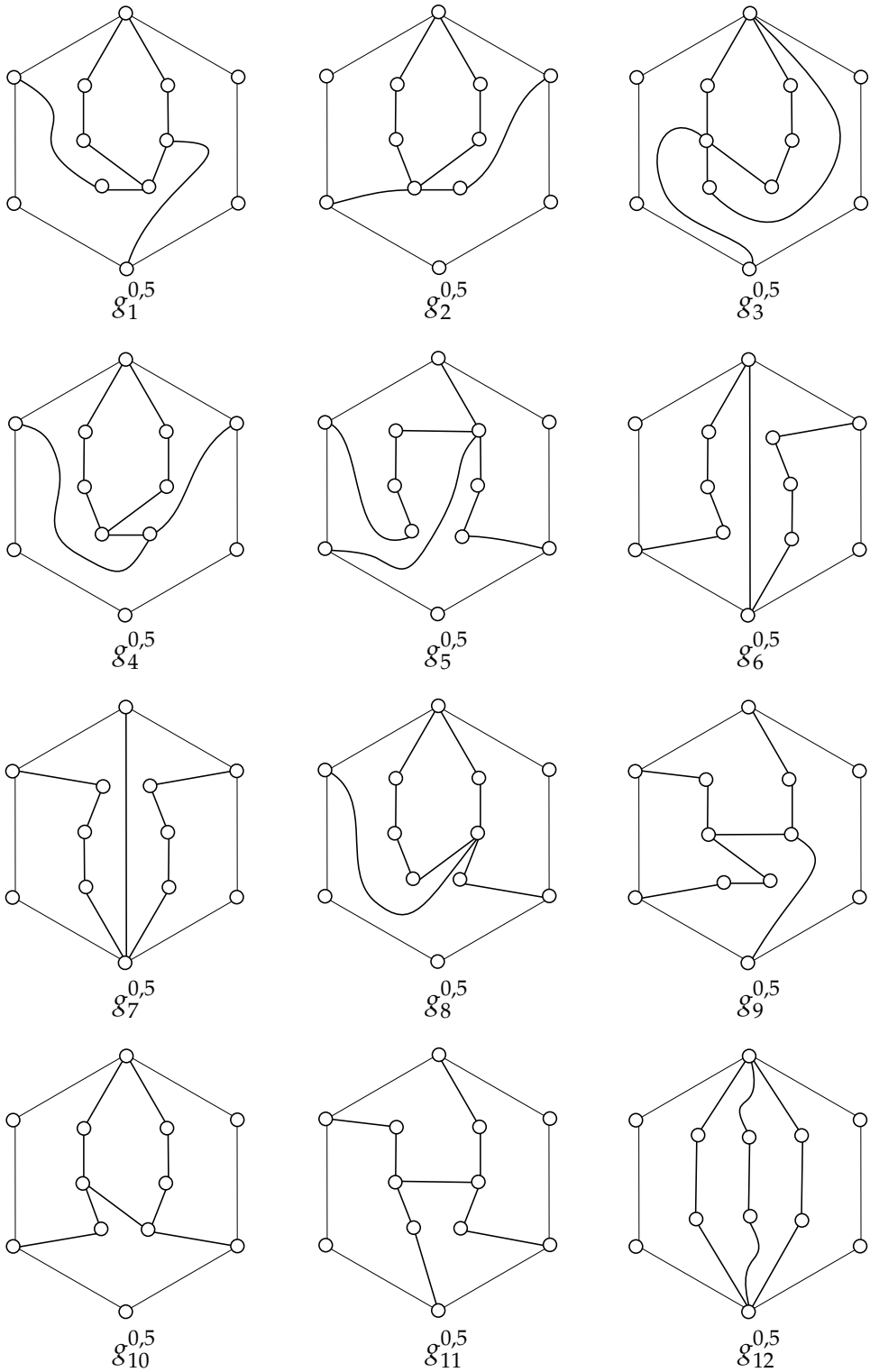}
    \label{r09}
\end{figure}

\begin{figure}[h!t]
    \centering
		\caption{Table of projections with four triple-points.}
		\ \\
    \def\svgwidth{.99\columnwidth}
    \import{./figs/}{LM15.pdf_tex}
    \label{r15}
\end{figure}

\begin{figure}[h!t]
    \centering
		\caption{Table of projections with four triple-points. (cont.)}
		\ \\
    \def\svgwidth{.99\columnwidth}
    \import{./figs/}{LM16.pdf_tex}
    \label{r16}
\end{figure}

\begin{figure}[h!t]
    \centering
		\caption{Table of projections with four triple-points. (cont.)}
		\ \\
    \def\svgwidth{.99\columnwidth}
    \import{./figs/}{LM13.pdf_tex}
    \label{r13}
\end{figure}

\end{document}